\numberwithin{equation}{section}
\def\emptyset{\varnothing}
\def\ti{\to\infty}
\def\1{{\mathchoice {1\mskip-4mu\mathrm l}      % Blackboard bold 1
{1\mskip-4mu\mathrm l}
{1\mskip-4.5mu\mathrm l} {1\mskip-5mu\mathrm l}}}
\def\comment#1{}
\newcommand{\eps}{\varepsilon}
\renewcommand{\epsilon}{\varepsilon}
\newcommand{\Fcal}   {{\mathcal F }}
\def\ignore#1{}
\newcommand{\R}     {\mathbb{R}}
\newcommand{\Z}     {\mathbb{Z}}
\newcommand{\N}     {\mathbb{N}}
\renewcommand{\P}   {\mathbb{P}}
\newcommand{\E}     {\mathbb{E}}
\def\bbP{\mathbb{P}}
\def\bbE{\mathbb{E}}
\newtheorem{assumption}{Assumption}
\newtheorem{thm}{Theorem}[section]
\newtheorem{lem}[thm]{Lemma}
\newtheorem{prop}[thm]{Proposition}
\newtheorem{rmk}[thm]{Remark}
\newtheorem{Ex}[thm]{Example}
\title[Long range 1-cookie random walk with positive speed]{Long range one-cookie random walk with positive speed
 }
\date{}
\author{Andrea Collevecchio}
\address[\sc A. Collevecchio]{School of Mathematics, Monash University, Victoria 3800, Australia}
\email{andrea.collevecchio@monash.edu}
\author{Kais Hamza}
\address[\sc K. Hamza]{School of Mathematics, Monash University, Victoria 3800, Australia}
\email{kais.hamza@monash.edu}
\author{Tuan-Minh Nguyen}
\address[\sc T.M. Nguyen]{School of Mathematics, Monash University, Victoria 3800, Australia}
\email{tuanminh.nguyen@monash.edu}
\subjclass[2010]{60K35, 60K37, 60F15}
\keywords{excited random walks, self-interacting random walks}
\begin{document}
\begin{abstract}
We  study one-dimensional excited random walks with non-nearest neighbour jumps. 
When the process is at a vertex that has not been visited before, its next transition has a positive drift to the right, possibly with long jumps. Whenever the process visits a vertex that has already been visited in the past, its next transition is the one of a simple symmetric random walk. 
We give a sufficient condition for the process to have positive speed.
\end{abstract}
\maketitle

\section{Introduction}
Cookie random walks (also known as excited random walks) provide a model for random motion with long memory where the transition out of a vertex depends on the number of visits made in the past to that vertex. This phenomenon is illustrated by initially placing a number of cookies on each vertex and consuming one cookie at each visit {to} the vertex. While the number of cookies on a particular vertex is non-zero, the random motion out of the vertex is in excited mode, that is the walk behaves locally like a biased random walk and its drift depends on the number of remaining cookies at the present vertex.
Once the vertex runs out of cookies, the random motion reverts to that of an unbiased (or symmetric) random walk. Such processes are clearly non-Markovian and their transition laws depend on the local time of the walk at its current location.

This model was introduced by Benjamini and Wilson \cite{BW2003} and was later generalized by Zerner~\cite{Z2005}. In recent years, significant attention was afforded to the study of one-dimensional nearest-neighbour excited random walks resulting in remarkable phase transition results for the asymptotic behavior of the model, including criteria for recurrence/transience \cite{ABO, Z2005} and non-ballisticity/ballisticity \cite{BasS1, BasS2}, as well as characterization of the limit distribution in certain specific regimes -- see \cite{DK2012, KM2011, KZ2013, P2012} for more details.

In the present paper, we study a model of one-cookie random walks with long jumps to the right and prove that non-nearest neighbour one-cookie random walks may have positive speed. These processes are defined as follows. Fix a distribution $q$ on the set of integers  $\{-1, 0, 1, \ldots, L\}$.  When the process visits a vertex for the first time, its next jump has distribution $q$. When the walk visits a vertex that was previously visited, it behaves like a nearest-neighbour simple symmetric random walk. It's worth mentioning that our model is a special case of the multi-excited random walks with non-nearest neighbour jumps considered by Davis and Peterson~\cite{Dav} in which the support of the jumps can be an unbounded subset of $\Z$. These authors proved an explicit criterion for recurrence/transience for this general model. They also conjectured the existence of speed and a criterion for ballisticity. We prove the first conjecture in the case of bounded-from-above jumps and the second one in a single cookie environment under a stricter criterion on the tail probabilities of $q$.

\section{Main results}

For any $m,n \in \Z$, $m\le n$, set $[m,n]_\Z:= \{m, \ldots, n\}$.
Fix $C \in \N$ and probability measures $(q_i)_{i \in [1,C]_\Z}$ on $\Z$. Denote by $\langle q_i \rangle = \sum_{k\in\Z} kq_i(k)$  the mean value of $q_i$. Throughout this paper, we require the following condition:
\begin{assumption}\label{assump:0}
Assume that for each $i\in[1,C]_{\Z}$, 
\begin{itemize}
\item $\langle q_i \rangle\ge 0$, \item $q_i$ has the same support $\Lambda:=\{k\in \Z : q_i(k)>0\}$ {which contains at least two elements}. 
\end{itemize}
\end{assumption}
We define the long range $C$-cookie random walk $Y$ as follows. Assign to each vertex in $\Z$ exactly $C$ cookies and set $Y_0 =0$. Each time the process visits a vertex that has exactly $i\in [1,C]_\Z$ cookies, it eats one of them, thus reducing the number of cookies at the current vertex by 1, and uses $q_i$ to determine its next transition. If the vertex has no cookies, then the walk moves to one of its two nearest neighbours with equal probability; that is it uses, $q_0$,  the symmetric measure on $\{-1,1\}$ to decide on its next step. More formally,  the transition distribution of $Y$ is given by
$$\P\Big(Y_{t+1}=Y_{t}+k\ |\ \mathcal{F}_t\Big)=q_{\mathcal{C}(Y_t,t)}(k), \text{ for } t\in \Z_+ \text{ and } k\in \Lambda,$$
where $\mathcal{F}_t$ is the $\sigma$-field generated by $(Y_s)_{0\le s\le t}$, and $\mathcal{C}(j,t)=\max\Big(C-\sum_{s=0}^{t-1}\1_{\{Y_s=j\}}, 0\Big)$ denotes the number of remaining cookies at vertex $j$ right before time $t$.
The process thus defined is a long-range $C$-cookie random walk on $\Z$.

This model is a special case of the multi-excited random walks with non-nearest neighbour steps considered by Davis and Peterson in \cite{Dav}. In their model, the support $\Lambda$ can be an arbitrary subset of $\Z$ (possibly unbounded). The authors in \cite{Dav} investigated the recurrence of such processes and established a phase transition driven by the key quantity, called \textit{the expected total drift},
$$\delta := \sum_{i=1}^C \langle q_i \rangle.$$
In particular, Theorem 1.6 in \cite{Dav} implies that the process $Y$ is recurrent, i.e. visits each vertex infinitely often,  if and only if $0\le\delta \le 1$. We define $\lim_{t\to\infty}Y_t/t$ to be the \textit{speed} of $Y$, when the limit exists.

The nearest-neighbour case, i.e. $\Lambda=\{-1,1\}$, has been extensively studied. When the walk is nearest-neighbour and $C=1$, the process is  always recurrent, as pointed out by Benjamini and Wilson in \cite{BW2003}. When $C$ is an arbitrary positive integer, Basdevant and Singh (see Theorem  1.1 in \cite{BasS1}) showed that the nearest-neighbour process has positive speed if and only if $\delta>2$. Since in this case $\langle q_i \rangle<1$, in order for $Y$ to have a positive speed, $C$ must be at least 3. 

In the present paper, we focus on the case of 1-cookie random walks skip-free to the left with bounded jumps to the right, i.e. the following assumption holds true
\begin{assumption}\label{asm:1}
$C=1$, $\inf(\Lambda)=-1$ and $L:=\sup(\Lambda)\in[1,\infty)$.
\end{assumption}
 More specifically, we provide a partial answer to the Davis and Peterson conjecture (see Conjecture 1.8 in \cite{Dav}), that $Y$ has positive speed if and only if $\delta>2$. In fact we show that positive speed results under a stricter requirement \eqref{eq:epsell} on the tail probabilities $$Q(j) = \sum_{k=j}^L q(k),$$ where we simply write $q$ for $q_1$.

\begin{thm}\label{th:mainth}
Consider a 1-cookie random walk $Y=(Y_t)_t$ with bounded jumps to the right. Under {Assumptions  \ref{assump:0} and \ref{asm:1}, the speed of $Y$ exists and is} positive 
if there exists a pair of integers  $c, \ell$ such that $c \ge 3$, $\ell \ge 3c$ and
\begin{equation}\label{eq:epsell}
 2 \left(1-\frac {c-1}{\ell}\right)Q(\ell+c-1) - 1 > \frac 2{c}.
\end{equation}
\end{thm}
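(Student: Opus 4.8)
The plan is to build a renewal structure from regeneration times, derive the existence of the speed from the resulting i.i.d.\ decomposition, and then reduce positivity of the speed to a single moment estimate — the finiteness of the expected inter-regeneration time — which is exactly where condition \eqref{eq:epsell} is consumed.

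First I would record that \eqref{eq:epsell} forces the total drift to be large. Since $\ell\ge 3c$ gives $1-\tfrac{c-1}{\ell}>\tfrac23$, inequality \eqref{eq:epsell} yields $Q(\ell+c-1)>\tfrac12+\tfrac1c$; combined with the elementary tail bound $\langle q\rangle\ge(\ell+c-1)\,Q(\ell+c-1)-1$ (coming from splitting $\langle q\rangle=\sum_{k=-1}^L kq(k)$ at level $\ell+c-1$ and discarding the nonnegative middle terms) and $\ell+c-1\ge 4c-1$, $c\ge3$, this gives $\delta=\langle q\rangle>2$. In particular $0\le\delta\le 1$ fails, so by Theorem~1.6 of \cite{Dav} the walk is transient, and since the drift is to the right and $Y$ is skip-free to the left, $Y_t\to+\infty$ almost surely. (It is worth noting that \eqref{eq:epsell} already lands above the conjectured threshold $\delta>2$.)

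Next I would set up the regeneration times. Call $\tau$ a \emph{regeneration time} if $Y_\tau>Y_s$ for all $s<\tau$ and $Y_t\ge Y_\tau$ for all $t\ge\tau$. The first condition makes $Y_\tau$ a strict record, so $Y_\tau$ and every vertex to its right are fresh; the second condition forces the post-$\tau$ trajectory to use only the cookie environment on $[Y_\tau,\infty)$, which is fresh and independent of the past. A standard regeneration argument then shows that the increments $(\tau_{k+1}-\tau_k,\;Y_{\tau_{k+1}}-Y_{\tau_k})_{k\ge1}$ are i.i.d. Transience together with a uniform positive lower bound on the escape probability $\P(\text{the fresh walk from a record never returns below it})>0$ — which holds precisely because the big forward jumps give a genuine chance of escape — guarantees $\tau_1<\infty$ a.s.\ and that regenerations occur infinitely often. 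The renewal--reward theorem then gives existence of the speed with value $\E[Y_{\tau_2}-Y_{\tau_1}]/\E[\tau_2-\tau_1]$. Because $Y_{\tau_2}-Y_{\tau_1}\ge 1$ and, each step advancing at most $L$, also $Y_{\tau_2}-Y_{\tau_1}\le L(\tau_2-\tau_1)$, both the numerator and the positivity of the speed reduce to the single statement $\E[\tau_2-\tau_1]<\infty$.

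The hard part is this last moment bound, and it is the step that genuinely uses \eqref{eq:epsell}. Since the walk is skip-free to the left, the time elapsed inside a regeneration block is controlled by the total number of downward unit steps, i.e.\ by the total number of down-crossings $\sum_n D_n$ of the edges met during the block; each such down-crossing is a backtracking event that transience must later undo. I expect the cleanest route to be a comparison of the level-by-level down-crossing counts with a subcritical branching process with migration, in the spirit of \cite{BasS1}, for which \eqref{eq:epsell} — with $c$ and $\ell$ fixing the coarse-graining scale and $(1-\tfrac{c-1}{\ell})$ accounting for the fraction of a block a big jump must clear — is exactly the condition making the comparison process integrable, so that $\E[\sum_n D_n]<\infty$ and hence $\E[\tau_2-\tau_1]<\infty$. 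The main obstacle is that the long jumps replace the usual nearest-neighbour down-crossing chain by a long-range (effectively multitype) branching recursion, so the subcriticality cannot be read off directly; taming this long-range recursion at the scale built into \eqref{eq:epsell} is the crux of the whole argument.
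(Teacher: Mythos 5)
Your reduction is correct as far as it goes: \eqref{eq:epsell} does force $\delta>2$ and hence transience via Theorem 1.6 of \cite{Dav}; the regeneration structure you describe is the standard one and yields i.i.d.\ increments; and since $1\le Y_{\tau_2}-Y_{\tau_1}\le L(\tau_2-\tau_1)$, both existence and positivity of the speed would indeed follow from the single bound $\E[\tau_2-\tau_1]<\infty$. Note, however, that existence does not come for free from the renewal structure alone: if $\E[\tau_2-\tau_1]$ and $\E[Y_{\tau_2}-Y_{\tau_1}]$ were both infinite, the ratio along regeneration times is an $\infty/\infty$ expression and no limit for $Y_t/t$ follows. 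This is why the paper proves existence separately (Proposition~\ref{prop:exs}), using cut times that additionally require a maximal jump of length $L$, so that $\E[J_2-J_1]<\infty$ can be extracted from Blackwell's renewal theorem; no bound on $\E[\tau_2-\tau_1]$ is ever needed or proved there. In your plan, by contrast, everything hangs on that one unproven estimate.

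And that is the genuine gap, the step you yourself label the crux: you give no proof that \eqref{eq:epsell} implies $\E[\tau_2-\tau_1]<\infty$, and the route you gesture at (a down-crossing comparison with a subcritical branching process with migration, in the spirit of \cite{BasS1}) is precisely what breaks down for long-range jumps, as you acknowledge --- the recursion becomes long-range/multitype, and you offer no mechanism by which the specific quantity $(1-(c-1)/\ell)\,Q(\ell+c-1)$ would certify subcriticality. The paper consumes \eqref{eq:epsell} in a completely different and much softer way, never touching such a moment bound. It coarse-grains $\Z$ into mega-vertices $B_j=[\ell j,\ell j+c-1]_\Z$, attaches to them an arrow system $\mathcal{E}$ updated at trigger times, and proves (Proposition~\ref{lem:lowerb}) that the $k$-th arrow of a mega-vertex equals $+1$ with conditional probability at least $(1-(c-1)/\ell)Q(\ell+c-1)>1/2$ for $k\le c$ and at least $1/2$ for $k>c$. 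Strassen's theorem then produces an i.i.d.\ arrow system $\mathcal{M}$ dominated by (a copy of) $\mathcal{E}$, whose induced nearest-neighbour walk is a genuine $c$-cookie excited walk with total drift $\delta^*=c\bigl(2(1-(c-1)/\ell)Q(\ell+c-1)-1\bigr)$; condition \eqref{eq:epsell} says exactly $\delta^*>2$, so the Basdevant--Singh theorem (Theorem 1.1 in \cite{BasS1}, used as a black box) gives that walk positive speed. The Holmes--Salisbury monotonicity result (Theorem~\ref{th:order}) together with Propositions~\ref{pr:h} and~\ref{prop:boh} transfers this to $\limsup_{t}Y_t/t>0$, and Proposition~\ref{prop:exs} upgrades the $\limsup$ to a limit. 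In short, the estimate missing from your proposal is not a routine verification to be filled in later; it is an open-ended problem that the paper's coupling was engineered specifically to avoid.
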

\begin{Ex}
Suppose $q(\ell+c-1) = 1 -\eps$ and $q(-1) = \eps$, then condition \eqref{eq:epsell} implies that $Y$ has positive speed for $$\eps<\frac{1-2(c-1)/\ell-2/c}{2(1-(c-1)/\ell)}.$$ In particular, \eqref{eq:epsell} holds with the choice $\ell=13$, $c = 3$ (and therefore $L=15$) and $\eps<1/66$.
\end{Ex}

{We generate the process $Y$ using a family of uniform random variables as follows.  Let $(\zeta_t)_{t\in \N}$ be independent uniform [0,1] random variables. Set $\widehat{Y}_0=0$ and for each $t\ge 0$ we define
\begin{align}\label{eq:constr} \widehat Y_{t+1}= \widehat Y_{t}+\left\{ \begin{array}{ll}\sum_{j=-1}^L j \1_{\{Q(j)>\zeta_{t+1}> Q(j+1)\}}&\text{if } \sum_{j=0}^{t-1}\1_{\{  \widehat Y_{j}= \widehat Y_t\}}=0,\\
-\1_{\{\zeta_{t+1}>1/2\}}+\1_{\{\zeta_{t+1}<1/2\}}& \text{otherwise}.
\end{array}\right.\end{align}
It is clear that the process $\widehat Y=(\widehat Y_t)_t$ defined above is a 1-cookie random walk satisfying Assumptions~\ref{assump:0} and~\ref{asm:1}.}
\begin{rmk}\label{rmk1}\
\begin{itemize}
    \item[i.] \eqref{eq:epsell} is a stricter requirement than Conjecture 1.8 in \cite{Dav}.
Indeed, \eqref{eq:epsell} implies that $\delta>2$ as can be seen from the following inequalities:
$$
\begin{aligned}
2 &< 2c\left(1-\frac {c-1}{\ell}\right)Q(\ell+c-1) - c  < 2 c Q(\ell) - 1 = (2c-1)Q(\ell) - (1- Q(\ell))\\
&< \ell Q(\ell) - (1- Q(\ell)) \le \sum_{k=\ell}^Lkq(k) + \sum_{k=-1}^{\ell-1}kq(k) = \langle q\rangle.
\end{aligned}
$$
\item[ii.] We also note that \eqref{eq:epsell} implies that $Q(1)>1/2$ or equivalently that $q(-1)+q(0)<1/2$. {In this case, we deduce from \eqref{eq:constr} that $\widehat Y_{t+1}- \widehat Y_t\ge -\1_{\{\zeta_{t+1}>1/2\}}+\1_{\{\zeta_{t+1}<1/2\}}$. Hence, for each $t_0\in \Z_+$, the process $(Y_t)_{t\ge t_0}$ stochastically dominates a simple symmetric random walk started at $Y_{t_0}$.}
\end{itemize}

\end{rmk}

\section{Existence of the speed in a general model}
The following is a very slight adaptation of an idea by Zerner (see Lemma 3.2.5 in \cite{Z2004}). In particular, Proposition~\ref{prop:exs} covers the cases of transient multi-excited random walks with jumps unbounded from below but bounded from above.
\begin{prop}\label{prop:exs}
Consider a $C$-cookie random walk $Y$ {satisfying Assumptions~\ref{assump:0}}, where the jumps are bounded to the right, i.e. $L=\sup(\Lambda)\in [1,\infty)$. Assume that $\delta>1$. Then, the speed of the process $Y$ exists, i.e.
\begin{equation}
\lim_{t \ti} \frac{Y_t} t \in [0,\infty) \quad \text{(a.s.).}
\end{equation}
\end{prop}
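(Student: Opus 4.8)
The plan is to run Zerner's first-passage/subadditivity scheme, the one new ingredient being the control of the bounded overshoot created by the jumps to the right. Realise $Y$ on a probability space carrying, for every site $x\in\Z$, an independent family of i.i.d.\ labels $(U_{x,j})_{j\ge1}$, the $j$-th visit to $x$ being resolved through $U_{x,j}$ and the law $q_{\max(C-j+1,\,0)}$; this makes the driving randomness stationary and ergodic under the spatial shift $\theta$. For $0\le m<n$ write $Y^{(m)}$ for the walk started afresh at $m$ in this environment (so $Y^{(0)}=Y$) and put
$$g_{m,n}:=\inf\{t\ge0:\ Y^{(m)}_t\ge n\},\qquad T_n:=g_{0,n},$$
so that $g_{m,n}=g_{0,n-m}\circ\theta_m$ and the family $\{g_{m,n}\}$ is stationary under the ergodic shift $\theta$. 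Since $\delta>1$, Theorem 1.6 of \cite{Dav} makes $Y$ transient, and with all drifts $\langle q_i\rangle\ge0$ this transience is to $+\infty$; hence $T_n<\infty$ a.s.\ for every $n$. Because $L=\sup(\Lambda)<\infty$, at its first passage the walk lands in $[n,n+L-1]$, i.e.\ the overshoot $O_n:=Y_{T_n}-n$ satisfies $0\le O_n\le L-1$. Everything reduces to proving that $T_n/n$ converges a.s.\ to a constant $\gamma\in[1/L,\infty]$, the speed being $1/\gamma$.

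The analytic core is a monotonicity lemma in the spirit of Zerner: if two walks start at the same site and one has, at every vertex, at least as many cookies as the other (each cookie only biasing a step to the right), then they can be coupled so that the richer walk stays weakly to the right for all time, and so reaches every level no later. I would apply this at time $T_m$: the continuation of $Y$ starts at $Y_{T_m}=m+O_m$ with the cookies at all sites $<m$ already eaten, whereas a fresh copy launched from the same site $m+O_m$ still has all of them. Monotonicity then gives $T_n-T_m\ge g_{m+O_m,\,n}\ge g_{m+L-1,\,n}$, the second step because $g_{\cdot,n}$ is non-increasing in the starting level and $O_m\le L-1$. Thus
$$T_n\ \ge\ T_m+g_{m+L-1,\,n}\ =\ T_m+\big(g_{0,\,n-m-L+1}\circ\theta_{m+L-1}\big),$$
which is super-additivity of the first-passage times up to a shift of at most $L-1$ levels.

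Feeding this into the super-additive form of Kingman's subadditive ergodic theorem, the stationarity of $\{g_{m,n}\}$ under the ergodic shift yields a deterministic $\gamma\in[1/L,\infty]$ with $T_n/n\to\gamma$ a.s.\ (the value $\gamma=\infty$, i.e.\ zero speed, being allowed precisely because the super-additive normalisation is a supremum rather than an infimum; the lower bound $\gamma\ge1/L$ comes from $T_n\ge\lceil n/L\rceil$). It remains to pass from the subsequence $(T_n)$ to $(Y_t)$: writing $n(t)$ for the largest level reached by time $t$, one has $T_{n(t)}\le t<T_{n(t)+1}$ and $Y_t\le n(t)$, so $\limsup_t Y_t/t\le1/\gamma$; the matching lower bound follows from $Y_{T_n}=n+O_n\ge n$ together with transience, once one checks that the leftward excursions (which may be long, as jumps to the left are unbounded) do not depress $\liminf_t Y_t/t$ below $1/\gamma$.

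The main obstacle is precisely the interaction of the coupling with the overshoot. For nearest-neighbour walks ($\Lambda=\{-1,1\}$) the walk lands exactly on each new level, $O_n\equiv0$, and the first-passage times are \emph{exactly} super-additive, so Zerner's argument applies verbatim; with jumps of size up to $L$ a single step may vault several levels at once, and the $(L-1)$-level defect above must be shown harmless for the $1/n$ normalisation (for instance by the modified Fekete bound $\gamma=\sup_m\E[T_m]/(m+L-1)$ at the level of means, and by a version of the ergodic theorem tolerating a bounded stationary defect at the level of almost-sure convergence). Controlling the long leftward excursions in the final transference step is the secondary technical point; both are where the care beyond the nearest-neighbour case is needed.
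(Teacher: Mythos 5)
Your argument has a genuine gap at its core: the ``monotonicity lemma in the spirit of Zerner'' that everything rests on is not available for walks with non-nearest-neighbour jumps, and you use it twice (to get $T_n-T_m\ge g_{m+O_m,n}$ by comparing the cookie-depleted continuation with a fresh walk, and again to claim $g_{\cdot,n}$ is non-increasing in the starting level). The standard couplings behind such statements depend crucially on the fact that two nearest-neighbour walks on $\Z$ cannot exchange order without occupying the same site at the same time; with jumps of size up to $L$ the poorer (or lower-started) walk can leapfrog the richer one without the two ever meeting, so the ``stays weakly to the right'' coupling cannot be run. Even in the nearest-neighbour case this kind of environment-monotonicity is delicate rather than routine --- it is exactly the content of the Holmes--Salisbury combinatorial result (Theorem~1.3 of \cite{MS2012}), which is stated for nearest-neighbour arrow walks only; indeed, the whole purpose of the present paper's mega-vertex/arrow-system construction is to manufacture a nearest-neighbour walk to which that dominance theorem applies, precisely because no such comparison is known for the long-jump walk itself. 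As written, your proof assumes the hard part. (Secondary, patchable issues: the Kingman-type theorem ``tolerating a bounded stationary defect'' and the integrability hypotheses, e.g. the lower bound $\E[X_{0,n}]\ge -cn$ needed in Liggett's form when you pass to $-T_n$, are waved at rather than handled; and the final transference step controlling leftward excursions, with jumps unbounded below, is left open.)

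For contrast, the paper's proof avoids monotonicity entirely. It introduces cut times $\tau_k$: times at which the walk sits at a fresh running maximum, takes a jump of exactly $L$ (the maximal step), and never afterwards returns below the landing site. Because the site of the jump was a running maximum and the jump has maximal length, the walk after $\tau_k+1$ evolves in completely fresh territory, so the pairs $(J_{k+1}-J_k,\tau_{k+1}-\tau_k)$ with $J_k=Y_{\tau_k}$ are i.i.d. Finiteness of the cut times follows from a Borel--Cantelli argument using $\P(\nabla_j\mid\mathcal F_{S^{(j)}})=\alpha q_1(L)>0$, the speed then exists by the strong law of large numbers, and the needed bound $\E[J_2-J_1]<\infty$ is extracted from Blackwell's renewal theorem rather than from any comparison between walks. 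If you want to rescue a Kingman-style proof, you would first have to prove a dominance result for long-jump excited walks, which is an open obstacle, not a technicality.
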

\begin{proof}
Define $\tau_0=0$ and
$$ \tau_{n+1} = \inf\{t \ge \tau_n \colon Y_{t+1}- Y_{t} = L \mbox{ and } Y_s < Y_t \mbox{ and }Y_{t+1}\le Y_u \mbox{ for all } s<t<u\}.$$
The r.v. $\tau_1$ is the first time the walk reaches a new maximum, say $j$, has a forward jump of length $L$ and then never jumps back to a site smaller than $j+L$. The $\tau_n$'s define the consecutive such times and represent a sequence of cut-times of $Y$. 

{Since $\delta>1$, it follows from Theorem 1.6 in \cite{Dav} that the process $Y$ is a.s. transient to the right, i.e. $\lim_{t\ti}Y_t=\infty$. For $j\in \Z^+$ set $A_j := [jL,(j+1)L-1]_\Z$. Note that $Y$ must eventually get larger than the largest vertex in $A_j$, $(j+1)L-1$, and since $|Y_{t+1}-Y_t|\leq L$, $Y$ cannot skip over $A_j$, the length of which is $L$. In other words, $A_j$ will be reached from the left in a finite time, almost surely. Let $S^{(j)}=\inf\{t\geq0 \colon Y_t\in A_j\}$. Then on the event $\nabla_j :=\{\exists k\colon \tau_k = S^{(j)}\}$, $A_j$ is visited by the process $Y$ exactly once. Hence $\nabla_j$ occurs if and only if $Y_t \ge Y_{S^{(j)}+1},  \forall t \ge S^{(j)}+1$ and $Y_{S^{(j)}+1} - Y_{S^{(j)}}=L$. We thus have that
\begin{equation} \label{prob.nab}
\begin{aligned}
\P\big(\nabla_j\ \big|\ \mathcal{F}_{S^{(j)}}\big)& = \E\left[\P\big(\nabla_j\;\big|\; \Fcal_{S^{(j)}+1}\big)\ \big|\ \mathcal{F}_{S^{(j)}}\right]\\
&= \E\Big[\P\big(Y_t - Y_{S^{(j)}+1} \ge 0,  \forall t \ge S^{(j)}+1\;\big|\; \Fcal_{S^{(j)}+1}\big) \1_{\big\{Y_{S^{(j)}+1} - Y_{S^{(j)}} = L\big\}}\ \big|\ \mathcal{F}_{S^{(j)}}\Big]\\
&=\P(Y_t\ge 0, \ \forall t \ge 0) \E\Big[\1_{\big\{Y_{S^{(j)}+1} - Y_{S^{(j)}} = L\big\}}\ \big|\ \mathcal{F}_{S^{(j)}}\Big]\\
&= \alpha q_1(L),
\end{aligned}
\end{equation}
where $\alpha:=\P(Y_t\ge 0, \ \forall t \ge 0)$. By Lemma 5.6 in \cite{Dav}, we conclude that $\alpha>0$. The third step in \eqref{prob.nab} is justified by the fact that by time $S^{(j)}+1$ the walk has not visited any vertex to the right of $Y_{S^{(j)}+1}$. Hence, the probability that it never visits a vertex to the left of $Y_{S^{(j)}+1}$,  conditional on the past, equals $\P(Y_t\ge 0, \ \forall t \ge 0)$.}

{We next prove by induction that for each $k\ge 0$, $\tau_k$ is a.s. finite. Recall that $\tau_0=0$. Assuming that for some fixed $k$, $\tau_k$ is a.s. finite, we show that $\tau_{k+1}$ is also a.s. finite. Indeed, set $R^{(j)}:=\inf\{t\ge S^{(j)}+1: Y_t< Y_{S^{(j)}}+L\}$. Let $M$ is a fixed positive integer and $(\nu_n)_{n\ge0}$ be a sequence defined recursively by $\nu_0=M$ and $$\nu_{n+1}=\inf\big\{j\ge \nu_n+1: S^{(j)}\ge R^{(\nu_n)}\big\}.$$
Note that $\{\nu_{n+1}<\infty\}=\{\nu_n<\infty, R^{(\nu_n)}<\infty\}= \nabla_{\nu_n}^c\cap\{\nu_n<\infty\}$.  We thus have 
\begin{align*}\P(\nu_n<\infty)& =\prod_{i=1}^n\P\Big(\nu_i<\infty\ |\ \nu_{i-1}<\infty  \Big)
 = \prod_{i=1}^n\P\Big(\nabla_{\nu_{i-1}}^c\ |\ \nu_{i-1}<\infty  \Big)
 = (1-\alpha q_1(L))^n.
\end{align*}
The last identity follows from the fact that on $\{\nu_{i-1}<\infty\}$, we can reason as in \eqref{prob.nab} to get  $\P\big(\nabla_{\nu_{i-1}}^c\ |\   \mathcal{F}_{S^{(\nu_{i-1})}}\big)=1-\alpha q_1(L)$. Hence, by Borel-Cantelli lemma, there a.s. exists $N$ such that $\nu_{N-1}<\infty$, $\nu_N=R^{(\nu_{N-1})}=\infty$ and thus $\nabla_{\nu_{N-1}}$ occurs. Note that $\{\tau_k<M\}\cap\{\exists N: \nabla_{\nu_{N-1}} \text{ occurs}\}\subset\{\tau_{k+1}<\infty\}$. 
Since $M$ is an arbitrary positive integer and $\P\big(\tau_k<\infty\big)=\P\big(\exists N: \nabla_{\nu_{N-1}} \text{ occurs}\big)=1$, we obtain that a.s. $\tau_{k+1}<\infty$.  By the principle of induction, we conclude that for each $k$, $\tau_k$ is a.s. finite. Also note that $\tau_k\to\infty$ as $k\to\infty$.   
}

Let $J_k = Y_{\tau_k}$.
We have that for any $t$ there exists $k$ such that $\tau_k\le t < \tau_{k+1}$, and thus
$$ \frac{J_k}{\tau_{k+1}}  \le \frac{Y_t}t <  \frac{J_{k+1}}{\tau_k}. $$
Moreover,  $(J_{k+1} - J_k, \tau_{k+1} - \tau_k )_{k\ge 1}$ is a sequence of independent and identically distributed random variables independent of $(J_1,\tau_1)$. Hence,  if we prove that $\E[J_2-J_1]< \infty$, we can appeal to the strong law of large numbers to conclude that 
$$
\lim_{t \ti} \frac{Y_t} t = \frac{\E[J_2-J_1]}{\E[\tau_2-\tau_1]}.
$$
In order to prove that $\E[J_2-J_1]< \infty$ we reason as follows. Set $\Omega_j :=\{\exists k\colon J_k \in A_j\}$. {By virtue of \eqref{prob.nab}, we have
$$ \P(\Omega_j) \ge \P\big(\nabla_j\big)\ge \alpha q_1(L)>0.$$
On  the other hand,}
$$
\begin{aligned}
& \limsup_{j \ti} \bbP(\Omega_j)
= \limsup_{j \ti} \bbP(\exists k \colon J_k \in  A_j) = \limsup_{j \ti} \bbP(\exists k \ge 2 \colon J_k \in  A_j)  \\
&= \limsup_{j \ti}\sum_{u=jL}^{(j+1)L-1} \bbP({\exists k\ge 2} \colon J_k = u)\\
& = \limsup_{j \ti} \sum_{u=jL}^{(j+1)L-1}\sum_{v=1}^\infty\bbP(\exists k \ge 2 \colon J_k=u|J_1=v) \bbP(J_1=v)\\
&\le \sum_{v=1}^\infty\bbP(J_1=v) \limsup_{j \ti}\sum_{u=jL}^{(j+1)L-1}\bbP(\exists k \ge 2 \colon J_k - J_1 = u-v),
\end{aligned}
$$
where the last inequality comes from the reverse Fatou's Lemma.
As the sequence $(J_{k+1}- J_k)_{k\in \N}$  is composed of i.i.d. (and independent of $J_1$), the sequence $(J_k-J_1)_{k\in \N}$ are the arrival times in a renewal process. Using the Blackwell Renewal Theorem (see e.g. \cite{D2010}), we have that 
$$ 
\lim_{u \ti} \bbP(\exists k \ge 2 \colon J_k - J_1 = u-v)  = \frac 1{\bbE[J_2 - J_1]}.
$$
Hence,
$$ 0 < {q_1(L) \alpha}  \le  \sum_{v=1}^\infty \bbP(J_1=v)\frac {L}{\bbE[J_2 - J_1]} = \frac {L}{\bbE[J_2 - J_1]}.
$$ 

\end{proof}
\section{Arrow systems}\label{sec:ArrowSys}
The main tool in the proof of Theorem~\ref{th:mainth} is a coupling with a nearest-neighbour multi-excited random walk. In order to establish such a coupling, we use arrow systems, which can be roughly described as follows.

Let's consider a {nearest-neighbour walk} $X$ on $\Z$. To each site $j\in\mathbb Z$ we assign an infinite sequence $(\mathcal{E}(j,k))_{k\in \N} \in \{-1,+1\}^{\N}$
  which describes the directions of the transitions of $X$  at each visit {to} the site. We also call $+1$ a right-pointing arrow ($\rightarrow$) and $-1$ a left-pointing arrow ($\leftarrow$). More precisely, we initially set $\mathcal{E}(j,k)=+1$ for all $j\in \Z, k\in \N$ and update arrows at each step of the walk. If the $k$-th transition out of a vertex $j$ is to $j-1$, then we change $\mathcal{E}(j, k)=-1$. Likewise, if the $k$-th transition out of a vertex $j$ is to $j+1$, then we keep $\mathcal{E}(j, k)=+1$. The infinite array $\mathcal{E}$ is called the \textit{arrow system} associated with $X$ and $\mathcal{E}(j,.)$ is called the \textit{arrow stack} at site $j$.    

Conversely, for each arrow system $\mathcal{E} = \big(\mathcal{E}(j, k)\big)_{j \in \Z, k \in \N}$, we can construct a corresponding nearest-neighbour walk $X^{\mathcal{E}}$ starting at 0. This can be seen using the following strong (pointwise) construction. 

Let $m^{\mathcal{E}}_t(j) = {\rm card}\{s\le t\colon  X^{\mathcal{E}}_s =j \}$ and $M^{\mathcal{E}}_t = m^{\mathcal{E}}_t(X^{\mathcal{E}}_t)$. Define $X^{\mathcal{E}}$ recursively as
$$X^{\mathcal{E}}_{t+1} = X^{\mathcal{E}}_t + \mathcal{E}(X^{\mathcal{E}}_t,M^{\mathcal{E}}_t).$$

We say that $\mathcal{E}$ dominates $\mathcal{E}'$, denoted by $\mathcal{E} \succeq\mathcal{E}'$, if
\begin{equation}\label{eq:ord1}
\sum_{k=1}^n \mathcal{E}(j, k) \ge \sum_{k=1}^n \mathcal{E}'(j, k) \qquad \mbox{for all } j \in \Z  \mbox{ and } n \in \N.
\end{equation}
 Holmes and Salisbury~\cite{MS2012} proved that this dominance extends to the nearest neighbour walks they define. 
\begin{thm}[Theorem 1.3.(iii) in \cite{MS2012}]\label{th:order} Consider two arrow systems $\mathcal{E}$ and $\mathcal{E}'$ such that
$\mathcal{E} \succeq\mathcal{E}'$ and $\limsup_{t\to\infty}X^{\mathcal{E}}_t\ge x$, for some $x \in \Z$.
Then,
$$\limsup_{t\ti}\frac{X^{\mathcal{E}}_t}t\geq
\limsup_{t\ti}\frac{X^{\mathcal{E}'}_t}t.$$
\end{thm}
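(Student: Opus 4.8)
The plan is to reduce the assertion about the speed $\limsup_t X^{\mathcal{E}}_t/t$ to a comparison of level hitting times, and then to establish the latter by a pathwise excursion–exchange argument for a single elementary modification of the arrow stacks. Write $H^{\mathcal{E}}(x) = \inf\{t\ge 0 : X^{\mathcal{E}}_t = x\}$ for the first time the level $x$ is reached (with $H^{\mathcal{E}}(x)=\infty$ if it is never reached). I would begin by proving the elementary deterministic identity
$$\limsup_{t\ti} \frac{X^{\mathcal{E}}_t}{t} = \limsup_{x\ti} \frac{x}{H^{\mathcal{E}}(x)},$$
valid whenever the running maxima of $X^{\mathcal{E}}$ increase to $+\infty$: one direction uses $X^{\mathcal{E}}_{H^{\mathcal{E}}(x)} = x$, the other that $X^{\mathcal{E}}_t \le \max_{s\le t} X^{\mathcal{E}}_s =: x_t$ together with $H^{\mathcal{E}}(x_t)\le t$. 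By this identity it suffices to show $H^{\mathcal{E}}(x) \le H^{\mathcal{E}'}(x)$ for every level $x$ that $X^{\mathcal{E}'}$ reaches; the hypothesis $\limsup_t X^{\mathcal{E}}_t \ge x$ is used to guarantee that $X^{\mathcal{E}}$ does not drift off to $-\infty$, so that the right-hand side is meaningful (and non-negative) in the borderline cases where $X^{\mathcal{E}'}$ is not transient to $+\infty$.

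Next I would reduce the dominance $\mathcal{E} \succeq \mathcal{E}'$ to elementary transpositions. A key observation is that, because the walk is nearest-neighbour and starts at $0$ with $x>0$, it can reach $x$ only from $x-1$ and stays strictly below $x$ up to time $H(x)$; hence the arrows at sites $\ge x$ are never consulted before $H(x)$ and play no role in its value. On each stack the order $\sum_{k\le n}\mathcal{E}(j,k)\ge \sum_{k\le n}\mathcal{E}'(j,k)$ is generated by adjacent moves that replace a consecutive pair $(-1,+1)$ in $\mathcal{E}'(j,\cdot)$ by $(+1,-1)$, each such move raising exactly one partial sum by $2$, and any fixed prefix stabilises after finitely many moves. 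Since for a fixed $x$ only the stacks at the finitely many sites $j<x$ visited before $H^{\mathcal{E}'}(x)$ matter, it is enough to prove hitting-time monotonicity under a \emph{single} such elementary move, located at some site $j<x$.

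The crux is the analysis of one elementary move by exchanging two excursions. Suppose $\mathcal{E}$ and $\mathcal{E}'$ agree except that $(\mathcal{E}(j,k),\mathcal{E}(j,k+1)) = (+1,-1)$ while $(\mathcal{E}'(j,k),\mathcal{E}'(j,k+1)) = (-1,+1)$. The two walks coincide until the $k$-th visit to $j$, at a common time $T$. Using that a nearest-neighbour walk stays on one side of $j$ between consecutive visits to $j$, the $\mathcal{E}$-walk now performs a right excursion (staying $\ge j$, of length $d_R$) followed by a left excursion (staying $\le j$, of length $d_L$), whereas the $\mathcal{E}'$-walk performs the same left excursion first and the same right excursion second. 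The point is that a right excursion consults only stacks at sites $>j$ and a left excursion only stacks at sites $<j$, so the intervening excursion leaves the relevant stacks untouched: the two excursions are literally identical paths in both walks, merely transposed in time, and both walks re-synchronise at $j$ after $d_R+d_L$ steps. Consequently every level $x>j$ first reached inside this interval is reached during the right excursion, hence exactly $d_L$ steps earlier by the $\mathcal{E}$-walk, giving $H^{\mathcal{E}}(x) = H^{\mathcal{E}'}(x) - d_L \le H^{\mathcal{E}'}(x)$, while for all other $x>j$ the hitting times coincide.

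Chaining the elementary moves (finitely many are relevant for each fixed $x$, by the prefix stabilisation above) then yields $H^{\mathcal{E}}(x)\le H^{\mathcal{E}'}(x)$ for every level reached by $X^{\mathcal{E}'}$, and feeding this into the limsup identity gives $\limsup_t X^{\mathcal{E}}_t/t \ge \limsup_t X^{\mathcal{E}'}_t/t$. I expect the main obstacle to be the passage from a single modification to the full order: one must justify that the (a priori infinite) sequence of transpositions turning $\mathcal{E}'$ into $\mathcal{E}$ can be arranged so that, for each target level $x$, only finitely many of them affect $H(x)$, and that the intermediate walks stay within a bounded space–time window — which should follow because the hitting times $H(x)$ are non-increasing along the deformation and bounded by the finite $H^{\mathcal{E}'}(x)$. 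Carefully handling the borderline cases of the limsup identity, when $X^{\mathcal{E}'}$ or $X^{\mathcal{E}}$ fails to be transient to $+\infty$, is the remaining point requiring attention.
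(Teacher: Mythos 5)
Your proposal cannot be compared against an argument in the text, because the paper does not prove this statement at all: it is quoted verbatim as Theorem 1.3(iii) of Holmes and Salisbury \cite{MS2012}. Judged on its own merits, your architecture is reasonable: the reduction of the limsup-speed comparison to monotonicity of the hitting times $H(x)$ of large positive levels, the use of the hypothesis $\limsup_{t}X^{\mathcal{E}}_t\ge x$ only to dispose of the degenerate cases, the finite space--time window argument for passing to the limit along infinitely many elementary moves, and the excursion-exchange analysis of a single adjacent transposition $(-1,+1)\to(+1,-1)$ are all correct. In particular, for one such transposition the two walks do perform the same right excursion and the same left excursion in opposite orders and re-synchronize, so every level above the modified site is indeed hit no later under $\mathcal{E}$.

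The genuine gap is your claim that the order $\succeq$ is generated by adjacent transpositions. It is not: a transposition only rearranges the entries of a stack and can never increase the number of $+1$'s, whereas \eqref{eq:ord1} allows the number of $+1$'s among the first $n$ entries of $\mathcal{E}(j,\cdot)$ to exceed that of $\mathcal{E}'(j,\cdot)$ by an amount growing without bound. Concretely, take $\mathcal{E}(j,\cdot)=(+1,+1,+1,\ldots)$ and $\mathcal{E}'(j,\cdot)=(-1,-1,-1,\ldots)$, or $(+1,+1,-1,-1,\ldots)$ versus $(+1,-1,-1,-1,\ldots)$: in these examples $\mathcal{E}'(j,\cdot)$ contains no adjacent pair $(-1,+1)$, so not a single move is available, and no sequence of moves, finite or pointwise convergent, reaches $\mathcal{E}$. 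A second elementary move is therefore indispensable: the flip of a single $-1$ into a $+1$ (legal when all subsequent prefix-sum gaps are at least $2$). For a flip your exchange argument fails precisely at its key step: after the flipped entry is consumed, the $\mathcal{E}$-walk has performed an extra right excursion, the $\mathcal{E}'$-walk an extra left excursion, the remaining entries of the stack are read with an index shift, and the two walks never re-synchronize. Handling this case needs the stronger observation --- essentially the content of the Holmes--Salisbury combinatorial lemma --- that both walks traverse the identical sequence of right-excursion paths and the identical sequence of left-excursion paths, merely interleaved according to their stacks, and that prefix-sum dominance forces the $K$-th right excursion under $\mathcal{E}$ to be preceded by no more left excursions than under $\mathcal{E}'$. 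Note finally that the flip case is not a removable corner case: it is exactly how the theorem is used in this paper, where $\widehat{\mathcal{E}}(j,k)\ge\mathcal{M}(j,k)$ holds entrywise, a relation built entirely from flips and containing no transpositions at all.
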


\section{Arrow systems coupled with $Y$}
As mentioned in the previous section, the main idea of the paper is a coupling of a long-range once-excited random walk with a nearest-neighbour multi-excited random walk. Roughly speaking, we achieve this by aggregating vertices into ``mega vertices''. The latter are made up of multiple vertices in the same interval {of size $c$}. This allows us to focus on movements between mega vertices while managing transitions within them. In order to create a coupling with a $c$-cookie random walk on nearest neighbours, we first build an arrow system linked to the mega-vertices mentioned above. Each time the process enters one of the intervals  identified with some mega vertex, we distinguish two cases, depending on if that interval has cookies or not. 

{Let $Y=(Y_t)_t$ be a long-range one-cookie random walk satisfying Assumptions~\ref{assump:0} and~\ref{asm:1}. Suppose throughout this section that there exist fixed integers $c\ge 3$ and $\ell\ge 3c$ such that  \eqref{eq:epsell} is fulfilled.
Recall that the process $Y$ is transient to the right, i.e. $\lim_{t\to\infty}Y_t=\infty$.} We create an infinite array $\mathcal{E} \in \{-1, 1\}^{\Z\times \N}$ as follows. We initially set all elements of $\mathcal{E}$ equal to one, i.e.
$$\mathcal{E}(j, k) =1 \mbox{ for all $j \in \Z$ and $k \in \N.$}$$
Let $R_t = \{Y_s \colon s \le t\}$, the range of the process $Y$ up to time $t$ which is also the subset of $\Z$ containing all vertices with no cookie up to time $t$. We observe that the return to a vertex cannot occur through a (forward) jump over cookies, i.e. if $Y_t\in R_{t-1}$ and $Y_{t-1}<Y_t$, then $[Y_{t-1},Y_t]_\Z\subset R_{t-1}$.

For $j\in \Z^+$, we set 
$B_j := [\ell j,\ell j+c-1]_\Z$ which we also call a \textit{``mega vertex"}. Let $T^{(j)}=\inf\{t\geq0 \colon Y_t\in B_j\}$ be the first hitting time to the ``mega vertex" $B_j$.

For each discrete-time process $(Z_t)_t$, we denote by $\Delta Z_t:=Z_t-Z_{t-1}$ the backward difference of $(Z_t)_t$ at time $t$. Let $\mathcal{C}^{(j)}_t$ be the number of cookies remaining in $B_j$ {right before} time $t$, i.e. the sequence $(\mathcal{C}^{(j)}_t)_t$ is given by
$\mathcal{C}^{(j)}_0 = c\mbox{ and } \mathcal{C}^{(j)}_{t+1}  = c-{\rm card}(R_t\cap B_j).$
In particular $\mathcal{C}^{(j)}_{T^{(j)}} = c$, $\mathcal{C}^{(j)}_{T^{(j)}+1} = c-1$, and $\Delta\mathcal{C}^{(j)}_{t+1}=-1$ if and only if $Y_t\in (R_t\setminus R_{t-1})\cap B_j$.
Notice that $\mathcal{C}^{(j)}_{t+1}$ is $\mathcal{F}_t$-measurable.

We update {the arrow system $\mathcal{E}$ described above}, at random times $V^{(j)}_k$ (defined below) using the path of $Y$. More specifically the arrow $\mathcal{E}(j,k)$ is revised in a three-step triggering scheme. Formally, $\mathcal{E}(j, \cdot)$ is a process that gets updated at the times $\big(V^{(j)}_k\big)_k$.  For each fixed value of $j\in \Z$, the sequence $(V^{(j)}_k)_{k}$ is recursively defined as follows. Set $V^{(j)}_0=0$. Assume $V^{(j)}_{k-1}$ is defined. The triggers depend on the presence/absence of cookies in and around $B_j$.
\setlength{\leftmargini}{12pt}
\setlength{\leftmarginii}{12pt}
\begin{itemize}
\item  On $\big\{\mathcal{C}^{(j)}_{V^{(j)}_{k-1}}>0\big\}$,
\begin{enumerate}
\item[i.] the first trigger is realised when the process $Y$ enters $B_j$
\begin{equation*}\label{def.T1k}
T^{(j)}_{1,k}=\inf\{t\ge V^{(j)}_{k-1}:Y_t\in B_j\}.\end{equation*}
Notice that it is not necessary that the process lands directly on a vertex with a cookie. If it does land on a vertex with a cookie the first trigger coincides with the second trigger that is introduced below.
\item[ii.] The second trigger is activated when the process $Y$ hits a cookie in $B_j$ or it exits the interval $[\ell(j-1)+c,\ell j+c-1]_\Z$, i.e.
\begin{equation*}U^{(j)}_{1,k}=\inf\{t\ge T^{(j)}_{1,k} :Y_t\not\in[\ell(j-1)+c,\ell j+c-1]_\Z\mbox{ or }\Delta\mathcal{C}^{(j)}_{t+1 }=-1\},\end{equation*}
Note that if $Y$ hits a cookie at time $T_{1,k}^{(j)}$ then $U_{1,k}^{(j)}=T_{1,k}^{(j)}$.
\item[iii.] The third and final trigger occurs when the process $Y$ either jumps from a vertex in $B_j$ equipped with a cookie (i.e. a previously not visited vertex) or exits an interval that depends on whether or not $\mathcal{C}^{(j+1)}_{U^{(j)}_{1,k}}>0$. If $\mathcal{C}^{(j+1)}_{U^{(j)}_{1,k}}=0$ (resp. $\mathcal{C}^{(j+1)}_{U^{(j)}_{1,k}}>0$) then we require the process to exit the interval $[\ell(j-1)+c,\ell(j+1)+c-2]_\Z$ (resp. $[\ell(j-1)+c,\ell(j+1)-{1}]_\Z$). More formally,
$$V^{(j)}_{1,k}=\left\{\begin{array}{ll}
V^{(j)}_{10,k} & \mbox{if }\mathcal{C}^{(j+1)}_{U^{(j)}_{1,k}}=0,\\
V^{(j)}_{11,k} & \mbox{if }\mathcal{C}^{(j+1)}_{U^{(j)}_{1,k}}>0,
\end{array}\right.$$
\begin{align*}\text{ where } \quad 
V^{(j)}_{10,k} =\inf\{t\ge U^{(j)}_{1,k}:Y_t\not\in[\ell(j-1)+c,\ell(j+1)+c-2]_\Z\mbox{ or }\Delta{\mathcal{C}^{(j)}_{t}}=-1\}
\\
\text{ and } \quad V^{(j)}_{11,k}=\inf\{t\ge U^{(j)}_{1,k}:Y_t\not\in[\ell(j-1)+c,\ell(j+1)-{1}]_\Z\mbox{ or }\Delta {\mathcal{C}^{(j)}_{t}}=-1\},\end{align*}
where $\Delta\mathcal{C}^{(j)}_0=0$. Note that if $Y$ hits a cookie in $B_j$ at time $U_{1,k}^{(j)}$ then $V_{1,k}^{(j)}=U_{1,k}^{(j)}+1$. Note also that if the walk hits $\ell(j-1)+c-1$ (i.e. hits $B_{j-1}$) at time $U_{1,k}^{(j)}$ then $V_{1,k}^{(j)}=U_{1,k}^{(j)}$.

\end{enumerate}

\item On $\big\{\mathcal{C}^{(j)}_{V^{(j)}_{k-1}}=0\big\}$,
\begin{enumerate}
\item[i'.]  the first and second triggers are combined into one and are realised when the process $Y$ hits vertex $\ell j+c-1$, 
$$T^{(j)}_{0,k}=U^{(j)}_{0,k}=\inf\{t\ge V^{(j)}_{k-1}:Y_t=\ell j+c-1\}.$$
\item[ii'.] The third and final trigger occurs when the process $Y$ exits the interval $[\ell(j-1)+c,\ell(j+1)+c-2]_\Z$ 
$$V^{(j)}_{0,k}=\inf\{t\ge U^{(j)}_{0,k}:Y_t\not\in[\ell(j-1)+c,\ell(j+1)+c-2]_\Z\}.$$
\end{enumerate}
\end{itemize}
Finally, we let
$$V^{(j)}_k=\left\{\begin{array}{ll}
V^{(j)}_{0,k} & \mbox{if }\mathcal{C}^{(j)}_{V^{(j)}_{k-1}}=0,\\
V^{(j)}_{1,k} & \mbox{if }\mathcal{C}^{(j)}_{V^{(j)}_{k-1}}>0.
\end{array}\right.$$
{We also use notations $T^{(j)}_k, U^{(j)}_k$ in the similar manner.}
%%%%%%%%%%%%%%%%%%%%%%%%%%%%%%%%%%%%%%%%%%%
%%%Flowchart begins
%%%%%%%%%%%%%%%%%%%%%%%%%%%%%%%%%%%%%%%%%%%
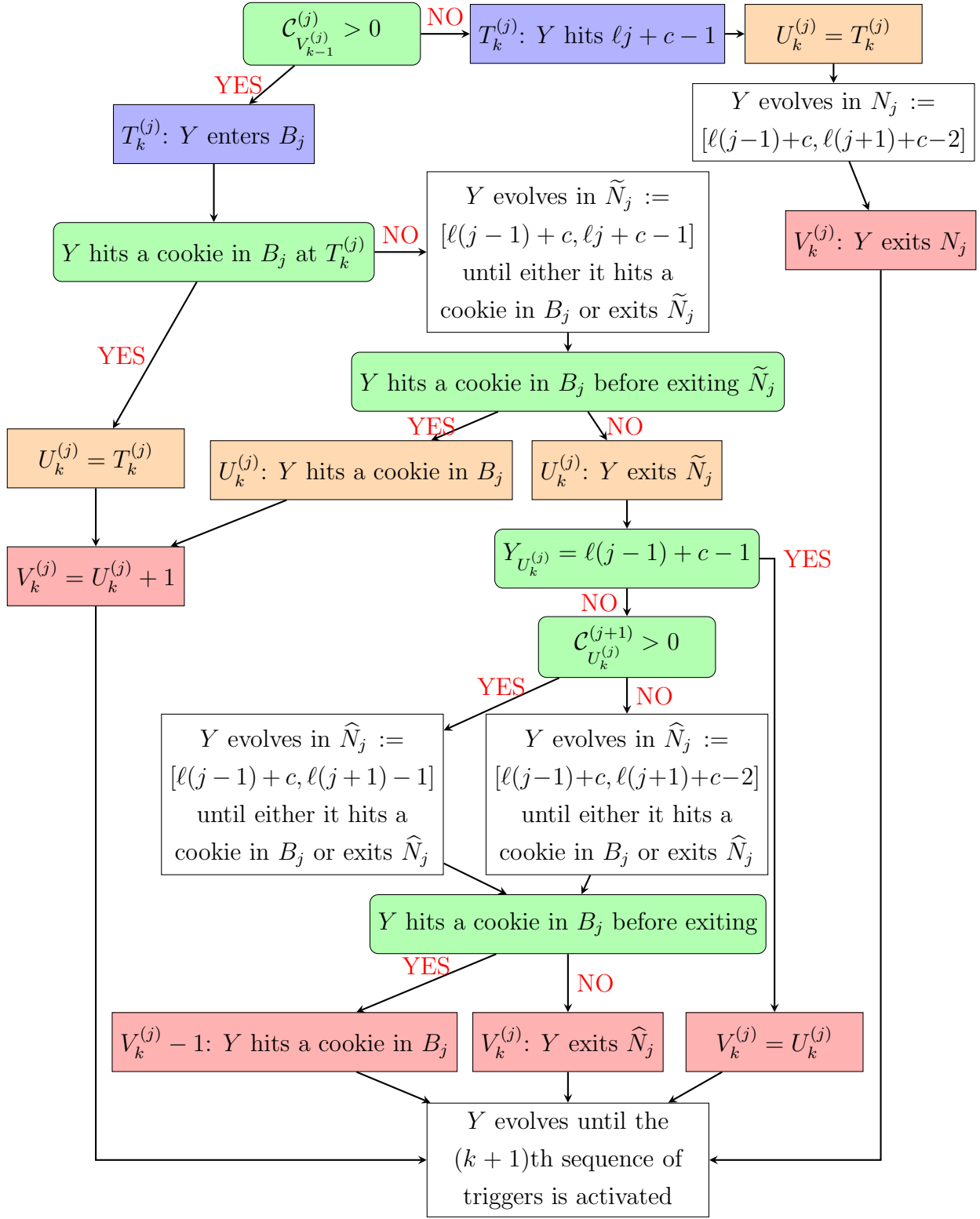
\begin{figure}
\centering
\tikzstyle{trigger1} = [rectangle, minimum width=3cm, minimum height=1cm,text centered, draw=black, fill=blue!30]
\tikzstyle{trigger2} = [rectangle, minimum width=3cm, minimum height=1cm, text centered, draw=black, fill=orange!30]
\tikzstyle{trigger3} = [rectangle, minimum width=3cm, minimum height=1cm, text centered, draw=black, fill=red!30]
\tikzstyle{comment} = [rectangle, text width=4.5cm, minimum height=1cm, text centered, draw=black]
\tikzstyle{update} = [rectangle, minimum height=1cm, text centered, draw=black, fill=yellow!30]
\tikzstyle{decision} = [rectangle, rounded corners, minimum width=3cm, minimum height=1cm, text centered, draw=black, fill=green!30]
\tikzstyle{arrow} = [thick,->,>=stealth]

\begin{tikzpicture}[node distance=2cm, scale=1, every node/.style={transform shape}]
\node (cook) [decision] {$\mathcal{C}^{(j)}_{V^{(j)}_{k-1}}>0$};

\node (enter0) [trigger1, right of=cook, xshift=2.5cm] {$T^{(j)}_k$: $Y$ hits $\ell j+c-1$};
\draw [arrow] (cook) -- node[anchor=south] {{\color{red} NO}} (enter0);

\node (U0) [trigger2, right of=enter0, xshift=2cm] {$U^{(j)}_k=T^{(j)}_k$};
\draw [arrow] (enter0) -- (U0);

\node (comU0) [comment, below of=U0, yshift=0.5cm,xshift=0cm] {$Y$ evolves in $N_j:=[\ell(j-1)+c,\ell(j+1)+c-2]$};
\draw [arrow] (U0) -- (comU0);

\node (V0) [trigger3, below of=comU0, yshift=0cm, xshift=0.8cm] {$V^{(j)}_k$: $Y$ exits $N_j$};
\draw [arrow] (comU0) -- (V0);

\node (enter1) [trigger1, below of=cook, yshift=0.3cm,xshift=-2cm] {$T^{(j)}_k$: $Y$ enters $B_j$};
\draw [arrow] (cook) -- node[anchor=east] {{\color{red} YES}} (enter1);

\node (cook1) [decision, below of=enter1] {$Y$ hits a cookie in $B_j$ at $T^{(j)}_k$};
\draw [arrow] (enter1) -- (cook1);

\node (comT10) [comment, right of=cook1, xshift=4cm] {$Y$ evolves in $\widetilde{N}_j:=[\ell(j-1)+c,\ell j+c-1]$ until either it hits a cookie in $B_j$ or exits $\widetilde{N}_j$};
\draw [arrow] (cook1) -- node[anchor=south] {{\color{red} NO}} (comT10);

\node (cook2) [decision, below of=cook1, xshift=6cm, yshift=-0.2cm] {$Y$ hits a cookie in $B_j$ before exiting ${\widetilde{N}_j}$ };
\draw [arrow] (comT10) -- (cook2);

\node (U101) [trigger2, below of=cook2, yshift=0.5cm, xshift=-3.5cm] {$U^{(j)}_k$: $Y$ hits a cookie in $B_j$};
\draw [arrow] (cook2) -- node[anchor=east] {{\color{red} YES}} (U101);

\node (U100) [trigger2, below of=cook2, yshift=0.5cm,xshift=1cm] {$U^{(j)}_k$: $Y$ exits $\widetilde{N}_j$};
\draw [arrow] (cook2) -- node[anchor=west] {{\color{red} NO}} (U100);

\node (U11) [trigger2, below of=cook1, xshift=-2cm, yshift=-1.5cm] {$U^{(j)}_k=T^{(j)}_k$};
\draw [arrow] (cook1) -- node[anchor=east] {{\color{red} YES}} (U11);

\node (VT) [trigger3, below of=U11] {$V^{(j)}_k=U^{(j)}_k+1$};
\draw [arrow] (U11) -- (VT);
\draw[arrow](U101)--(VT);

\node (EL) [decision, below of=U101, xshift=4.5cm,yshift=0.5cm] {$Y_{U^{(j)}_k}=\ell(j-1)+c-1$ };
\draw [arrow] (U100)--(EL);

\node (cook3) [decision, below of=EL,yshift=0.5cm] {$\mathcal{C}^{(j+1)}_{U^{(j)}_k}>0$};
\draw [arrow] (EL) --node[anchor=east] {{\color{red} NO}} (cook3);

\node (comU11) [comment, below of=cook3,xshift=-5.5cm,yshift=-0.5cm] {$Y$ evolves in ${\widehat{N}_j}:=[\ell(j-1)+c,\ell(j+1)-{1}]$ until either it hits a cookie in $B_j$ or exits ${\widehat{N}_j}$};
\draw [arrow] (cook3) -- node[anchor=south] {{\color{red} YES}} (comU11);

\node (comU10) [comment, below of=cook3, xshift=0cm, yshift=-0.5cm] {$Y$ evolves in ${\widehat{N}_j}:=[\ell(j-1)+c,\ell(j+1)+c-2]$ until either it hits a cookie in $B_j$ or exits {$\widehat{N}_j$}};
\draw [arrow] (cook3) -- node[anchor=west] {{\color{red} NO}} (comU10);

\node (cook4) [decision, below of=cook3, xshift=-1cm,yshift=-2.7cm] {$Y$ hits a cookie in $B_j$ before exiting};
\draw [arrow] (comU11) -- (cook4);
\draw [arrow] (comU10) -- (cook4);

\node (V10) [trigger3, below of=cook4] {$V^{(j)}_k$: $Y$ exits {$\widehat{N}_j$}};
\draw [arrow] (cook4) -- node[anchor=west] {{\color{red} NO}} (V10);

\node (V11) [trigger3, left of=V10, xshift=-2.8cm ] {$V^{(j)}_k-1$: $Y$ hits a cookie in $B_j$};
\draw [arrow] (cook4) -- node[anchor=south] {{\color{red} YES}} (V11);

\node (next10) [comment, below of=V10] {$Y$ evolves until the $(k+1)$th sequence of triggers is activated};
\draw [arrow] (V10) -- (next10);
\draw [arrow] (V0) |- (next10);
\draw [arrow] (V11) -- (next10);
\draw [arrow] (VT) |- (next10);

\node (VL) [trigger3, below of=cook4,xshift=3.5cm] {$V_k^{(j)}=U_k^{(j)}$};
\draw [arrow] (EL)-|node[anchor=west]{{\color{red} YES}} (VL);
\draw [arrow] (VL)-- (next10);

\end{tikzpicture}
\caption{The $k$-th trigger sequence associated with $B_j$, where we paint first triggers in blue, second triggers in orange, third triggers in red, and decision nodes in green.}
\label{fig:triggerseq}
\end{figure}
%%%%%%%%%%%%%%%%%%%%%%%%%%%%%%%%%%%%%%%%%%%
%%%Flowchart ends
%%%%%%%%%%%%%%%%%%%%%%%%%%%%%%%%%%%%%%%%%%%

As observed above, by virtue of $V^{(j)}_{k-1}\le\min\big(V^{(j)}_{0,k},V^{(j)}_{1,k}\big)$, $V^{(j)}_k$ is an $\mathcal F$-stopping time. As usual, $\inf\emptyset=\infty$ and if at any point in time none of the trigger events are realised, then all subsequent times are set to infinity.

Next, we define the rules that decide on the updates of the arrow system: $\mathcal{E}(j, \cdot)$ remains unchanged at time $V^{(j)}_k$ (i.e. $\mathcal{E}(j, k)=1$) if and only if
\begin{itemize}
    \item $V^{(j)}_k=\infty$, {\bf or}
    \item $V^{(j)}_k<\infty$ and $\mathcal{C}^{(j)}_{V^{(j)}_{k-1}}=0$ and $Y_{V^{(j)}_k}\ge\ell(j+1)+c -1$, {\bf or}
    \item $V^{(j)}_k<\infty$ and $\mathcal{C}^{(j)}_{V^{(j)}_{k-1}}>0$ and $\mathcal{C}^{(j+1)}_{U^{(j)}_{1,k}}=0$ and $Y_{V^{(j)}_k}\ge\ell(j+1)+c  -1$, {\bf or}
    \item $V^{(j)}_k<\infty$ and $\mathcal{C}^{(j)}_{V^{(j)}_{k-1}}>0$ and $\mathcal{C}^{(j+1)}_{U^{(j)}_{1,k}}>0$ and $Y_{V^{(j)}_k}\ge\ell(j+1)$.
\end{itemize}
{In all other cases, we set $\mathcal{E}(j, k)=-1$.}
%%%%%%%%%%%%%%%%%%%%%%%%%%%%%

\begin{prop}\label{lem:lowerb}
Fix $j \in \Z$ and $k\in\N$. Then
\begin{eqnarray}\label{eq:prob0}
& \P\Big(\mathcal{E}(j, k) =1\Big|\mathcal{F}_{T^{(j)}_{0,k}}\Big) \ge 1/2 \quad{\text{on}\quad \Big\{T^{(j)}_{0,k}<\infty,\mathcal{C}^{(j)}_{V^{(j)}_{k-1}}=0\Big\}}\quad\text{and} \\
& \label{eq:prob1}\displaystyle
    \P\Big(\mathcal{E}(j, k) =1\Big|\mathcal{F}_{T^{(j)}_{1,k}}\Big) \ge \left(1-\frac{c-1}{\ell}\right) Q(\ell+c-1)>1/2{\text{ on } \Big\{T^{(j)}_{1,k}<\infty,\mathcal{C}^{(j)}_{V^{(j)}_{k-1}}>0\Big\}}.
\end{eqnarray}
\end{prop}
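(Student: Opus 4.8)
The plan is to treat the two bounds separately, in each case exhibiting an explicit favourable event for $\{\mathcal{E}(j, k)=1\}$ whose conditional probability can be estimated by a gambler's-ruin computation, the key external input being the stochastic domination of $Y$ by a simple symmetric walk recorded in Remark~\ref{rmk1}(ii). For the no-cookie bound \eqref{eq:prob0}, at time $T^{(j)}_{0,k}$ the walk sits at the right endpoint $\ell j+c-1$ of $B_j$, the round ends when $Y$ leaves $N_j:=[\ell(j-1)+c,\ell(j+1)+c-2]_\Z$, and by the update rule $\mathcal{E}(j,k)=1$ precisely when this exit occurs on the right (at a site $\ge\ell(j+1)+c-1$). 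First I would couple the post-$T^{(j)}_{0,k}$ trajectory of $Y$ with a simple symmetric walk $W$ started at the same site so that $Y_t\ge W_t$ for all $t\ge T^{(j)}_{0,k}$. Since the starting site is equidistant, at distance $\ell$, from the two exit points $\ell(j-1)+c-1$ and $\ell(j+1)+c-1$, the ruin identity gives that $W$ exits $N_j$ on the right before the left with probability exactly $1/2$; on that event $Y\ge W$ forces $Y$ to exit on the right as well, since it can never have touched the left boundary while $W$ has not, and it has already overshot the right one. This yields the claimed $\ge 1/2$.

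For the cookie bound \eqref{eq:prob1} I would isolate the following sufficient event: the walk reaches a vertex of $B_j$ still carrying a cookie at the second-trigger time $U^{(j)}_{1,k}$ (rather than exiting $\widetilde N_j:=[\ell(j-1)+c,\ell j+c-1]_\Z$ first), and the fresh jump it then performs has length at least $\ell+c-1$. Because any vertex of $B_j$ is $\ge\ell j$, such a jump lands at a site $\ge\ell(j+1)+c-1$, exceeding both thresholds $\ell(j+1)+c-1$ and $\ell(j+1)$ that appear in the two cookie sub-cases of the update rule, so $\mathcal{E}(j,k)=1$ in all of them. Conditioning on $\mathcal{F}_{U^{(j)}_{1,k}}$, the fresh jump is independent of the past with law $q$, so this second factor contributes exactly $Q(\ell+c-1)$; it then remains to bound from below by $1-(c-1)/\ell$ the probability that a cookie of $B_j$ is hit before $\widetilde N_j$ is exited.

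For this last estimate I would again pass to the dominating symmetric walk and run a gambler's-ruin argument between the nearest unvisited (cookie) vertex of $B_j$ and the unfavourable boundary of $\widetilde N_j$. The geometric input is that a cookie of $B_j$ lies within distance $c-1$ of the walk, whereas the left endpoint $\ell(j-1)+c-1$ is at distance at least $\ell-c+1$. Skip-freeness to the left is what makes the left boundary the relevant ``far'' target: the walk cannot exit $\widetilde N_j$ on the left without first crossing the leftmost unvisited vertex of $B_j$ and thereby eating a cookie, and, in the favourable configuration, going right it meets a cookie before it can leave $B_j$ on the right. The ruin identity for distances $c-1$ and $\ell-c+1$ then produces the bound $(\ell-c+1)/\ell=1-(c-1)/\ell$, and the extreme case—entry at $\ell j$ with the sole remaining cookie at $\ell j+c-1$—shows that it is sharp.

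The main obstacle is to make the previous paragraph uniform over all admissible configurations at the entry time $T^{(j)}_{1,k}$: the entry site may be any vertex of $B_j$, visited or not, and the surviving cookies may lie on either side of it, so a careless ruin estimate built on an unfavourable right-exit of $\widetilde N_j$ can collapse to as little as $1/c$. The delicate point is therefore to use transience together with the domination and skip-freeness to rule out, or to fold into a favourable outcome, those configurations in which the walk enters the \emph{right} part of $B_j$ with all remaining cookies far to its left; this is precisely where the standing hypotheses $c\ge 3$ and $\ell\ge 3c$ should be exploited. Once \eqref{eq:prob1} is in hand, the strict inequality $\left(1-\tfrac{c-1}{\ell}\right)Q(\ell+c-1)>1/2$ is immediate, since \eqref{eq:epsell} rearranges to $\left(1-\tfrac{c-1}{\ell}\right)Q(\ell+c-1)>\tfrac12+\tfrac1c$.
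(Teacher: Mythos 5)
Your argument for \eqref{eq:prob0} is correct and is essentially the paper's own (domination by a symmetric walk started at $\ell j+c-1$, equidistant from the two exit points, plus skip-freeness to the left to transfer the right-exit event from the dominated walk to $Y$). For \eqref{eq:prob1}, however, there is a genuine gap, which you flag yourself in your final paragraph but do not close. Your favourable event is $\{Y \text{ hits a cookie of } B_j \text{ before exiting } \widetilde N_j=[\ell(j-1)+c,\ell j+c-1]_\Z \text{ on \emph{either} side}\}$ intersected with a subsequent jump of length $\ge \ell+c-1$. As you concede, the first factor admits no uniform lower bound of the form $1-(c-1)/\ell$: if $Y$ enters $B_j$ at an already-visited site near its right end while the surviving cookies lie to its left, the walk exits $\widetilde N_j$ to the right before touching a cookie with probability close to $1$, and your event then has conditional probability of order $Q(\ell+c-1)/c$, which is too small. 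Your proposed repair --- using transience, domination and the hypotheses $c\ge3$, $\ell\ge3c$ to ``rule out'' such configurations --- cannot work, because these configurations occur with probability bounded away from zero no matter what $c$ and $\ell$ are; they must be \emph{folded into} the favourable outcome, not excluded.

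The missing idea is that a right-exit of $\widetilde N_j$ is not a failure: it is exactly what the third trigger is designed to absorb, and your proposal never analyses the third-trigger phase at all. The paper's proof takes as its base event the union $\big\{\Delta\mathcal{C}^{(j)}_{U^{(j)}_{1,k}+1}=-1\big\}\cup\big\{Y_{U^{(j)}_{1,k}}\ge \ell j+c\big\}$, i.e.\ ``hit a cookie of $B_j$ \emph{or} leave $\widetilde N_j$ on the right, before hitting $\ell(j-1)+c-1$''. For this union the ruin estimate is uniform: the nearest point of the target set above $Y_{T^{(j)}_{1,k}}$ is within distance $c-1$ (if some cookie lies above, it is within $B_j$; if all cookies lie below, then $Y_{T^{(j)}_{1,k}}\ge \ell j+1$ and the level $\ell j+c$ is within $c-1$), while the left target is at distance at least $\ell-c+1$, giving the bound $1-(c-1)/\ell$. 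Conditionally on this union one then gets $\mathcal{E}(j,k)=1$ with probability at least $Q(\ell+c-1)$ by a case analysis of the interval $[U^{(j)}_{1,k},V^{(j)}_{1,k}]_\Z$: if, after a right-exit, $Y$ never revisits a cookie of $B_j$ before $V^{(j)}_{1,k}$, then skip-freeness forces the exit at $V^{(j)}_{1,k}$ to be on the right of the third-trigger interval (a left exit would require walking down through a remaining cookie), and the thresholds $\ell(j+1)$ and $\ell(j+1)+c-1$ in the update rule are calibrated so that this gives $\mathcal{E}(j,k)=1$ automatically; if instead a cookie is hit (at $U^{(j)}_{1,k}$ or at $V^{(j)}_{1,k}-1$), a fresh jump of length at least $\ell+c-1$ from a site $\ge \ell j$ clears both thresholds, contributing the factor $Q(\ell+c-1)$. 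This factorization over the union event, with its third-trigger case analysis, is the actual content of the proof of \eqref{eq:prob1} and is absent from your proposal.
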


\begin{proof}
Assuming that $\mathcal{C}^{(j)}_{V^{(j)}_{k-1}}=0$ and $T^{(j)}_{0,k}<\infty$, we have that $Y_{T^{(j)}_{0,k}}=\ell j+ c-1$ and the events\\
$\{{\mathcal E}(j,k)=1\}, \{{\mathcal E}(j,k)=-1\}$ are equivalent to $\big\{Y_{V^{(j)}_k}\ge\ell(j+1)+c-1\big\}$, $\big\{Y_{V^{(j)}_k}=\ell(j-1)+c-1\big\}$ respectively; and furthermore $Y_t\in [\ell(j-1)+c, \ell(j+1)+c-2]_{\Z}$ for all  $t\in [T_{0,k}^{(j)},V_{k}^{(j)}-1]_{\Z}$.
{Recall from Remark~\ref{rmk1}.ii that} $(Y_t)_{t\ge T_{0,k}^{(j)}}$ stochastically dominates a symmetric simple random walk started at the midpoint of the interval $[\ell(j-1)+c-1,\ell(j+1)+c-1]_{\Z}$. Hence, we can ascertain that, conditional on $\mathcal{F}_{T^{(j)}_{0,k}}$ and $\big\{T^{(j)}_{0,k}<\infty,\mathcal{C}^{(j)}_{V^{(j)}_{k-1}}=0\big\}$, the probability of the event $\{{\mathcal E}(j,k)=1\}=\big\{Y_{V^{(j)}_k}\ge\ell(j+1)+c-1\big\}$ is at least $1/2$.  This proves  
\eqref{eq:prob0}.

Assume now that $T_{1,k}^{(j)}<\infty$ and $\mathcal{C}^{(j)}_{V^{(j)}_{k-1}}>0$. Set $$\gamma_{j,k}=\ell \1_{\Big\{\mathcal{C}^{(j+1)}_{U_{1,k}^{(j)}}>0\Big\}}+(\ell+c-1)\1_{\Big\{\mathcal{C}^{(j+1)}_{U_{1,k}^{(j)}}=0\Big\}}.$$ Consider the process $Y$ from time $T_{1,k}^{(j)}$ (with $Y_{T_{1,k}^{(j)}}\in B_j$). We have the following cases:
\begin{itemize}
\item $Y$ hits a cookie in $B_j$ at time $U_{1,k}^{(j)}$ (and thus $V_{1,k}^{(j)}=U_{1,k}^{(j)}+1$). In this case, we have $\mathcal{E}(j,k)=1$ when $\Delta Y_{U_{1,k}^{(j)}+1}\ge \gamma_{j,k}$;
\item $Y_{U^{(j)}_{1,k}}= \ell(j-1)+c-1$ (and thus $V_{1,k}^{(j)}=U_{1,k}^{(j)}$). In this case, we always have $\mathcal{E}(j, k)=-1$;
\item $Y_{U^{(j)}_{1,k}}\ge  \ell j +c$ and $Y$ hits a cookie in $B_j$ at time $V_{1,k}^{(j)}-1$. In this case,  $\mathcal{E}(j, k)=1$ when $\Delta Y_{V_{1,k}^{(j)}}\ge \gamma_{j,k}$;
\item $Y_{U^{(j)}_{1,k}}\ge  \ell j +c$ and $Y$ does not hit any cookie in $B_j$ during $[U^{(j)}_{1,k}, V^{(j)}_{1,k}-1]_{\Z}$. In this case, we always have $\mathcal{E}(j,k)=1$ (in this case, we have $Y_{V_{1,k}^{(j)}}\ge \ell(j +1) $ if $C_{U_{1,k}^{(j)}}^{(j+1)}>0$ or $Y_{V_{1,k}^{(j)}}\ge \ell(j+1)  +c-1$ if $C_{U_{1,k}^{(j)}}^{(j+1)}=0$).
\end{itemize}
Hence, {on the event $\big\{T^{(j)}_{1,k}<\infty,\mathcal{C}^{(j)}_{V^{(j)}_{k-1}}>0\big\}$ we have}
\begin{eqnarray}\nonumber
\lefteqn{\P\Big(\mathcal{E}(j,k)=1\Big|\mathcal{F}_{T^{(j)}_{1,k}}\Big)}\\ \nonumber
& \ge  & \P\Big(\Delta\mathcal{C}^{(j)}_{U^{(j)}_{1,k}+1}=-1,\Delta Y_{U^{(j)}_k+1}\ge \gamma_{j,k}\Big|\mathcal{F}_{T^{(j)}_{1,k}}\Big)\\ \nonumber
& & + \P\Big(Y_{U^{(j)}_{1,k}}\ge \ell j + c,\Delta  \mathcal{C}^{(j)}_{V^{(j)}_{1,k}}=-1,\Delta Y_{V^{(j)}_k}\ge \gamma_{j,k}\Big|\mathcal{F}_{T^{(j)}_{1,k}}\Big)\\ \nonumber
& & + \P\Big(Y_{U^{(j)}_{1,k}}\ge \ell j + c,\Delta \mathcal{C}^{(j)}_{V^{(j)}_{1,k}}=0 \Big|\mathcal{F}_{T^{(j)}_{1,k}}\Big)\\
\label{eqr1}& = & \E\Big[\P\Big(\Delta Y_{U^{(j)}_k+1}\ge \gamma_{j,k}\Big|\mathcal{F}_{U^{(j)}_{1,k}}\Big)\1_{\Big\{\Delta\mathcal{C}^{(j)}_{U^{(j)}_{1,k}+1}=-1\Big\}}\Big|\mathcal{F}_{T^{(j)}_{1,k}}\Big]\\
\nonumber & & + \E\Big[\P\Big(\Delta Y_{V^{(j)}_k}\ge \gamma_{j,k}\Big|\mathcal{F}_{V^{(j)}_{1,k}-1}\Big)\1_{\Big\{Y_{U^{(j)}_{1,k}}\ge \ell j + c,\ \Delta\mathcal{C}^{(j)}_{V^{(j)}_{1,k}}=-1\Big\}}\Big|\mathcal{F}_{T^{(j)}_{1,k}}\Big]\\
\nonumber & & + \P\Big(Y_{U^{(j)}_{1,k}}\ge \ell j + c,\Delta \mathcal{C}^{(j)}_{V^{(j)}_{1,k}}=0 \Big|\mathcal{F}_{T^{(j)}_{1,k}}\Big)\\
\label{eqr2} & \ge & Q(\ell+c-1) \left[   \P\Big(\Delta\mathcal{C}^{(j)}_{U^{(j)}_{1,k}+1}=-1 \Big|\mathcal{F}_{T^{(j)}_{1,k}}\Big)\right. + \P\Big(Y_{U^{(j)}_{1,k}}\ge \ell j + c,\ \Delta\mathcal{C}^{(j)}_{V^{(j)}_{1,k}}=-1  \Big|\mathcal{F}_{T^{(j)}_{1,k}}\Big)\\
\nonumber & & + \left.\P\Big(Y_{U^{(j)}_{1,k}}\ge \ell j + c,\Delta \mathcal{C}^{(j)}_{V^{(j)}_{1,k}}=0 \Big|\mathcal{F}_{T^{(j)}_{1,k}}\Big) \right]\\
& = & Q(\ell+c-1) \P\Big( \Big\{Y_{U^{(j)}_{1,k}}\ge \ell j + c\Big\}\cup\Big\{\Delta\mathcal{C}^{(j)}_{U^{(j)}_{1,k}+1}=-1 \Big\}\Big|\mathcal{F}_{T^{(j)}_{1,k}} \Big),
\end{eqnarray}
where \eqref{eqr1} derives from the fact that $$\Big\{\Delta\mathcal{C}^{(j)}_{U^{(j)}_{1,k}+1}=-1\Big\}\in \mathcal{F}_{U^{(j)}_{1,k}}\quad\text{and} \quad\Big\{Y_{U^{(j)}_{1,k}}\ge \ell j + c, \Delta\mathcal{C}^{(j)}_{V^{(j)}_{1,k}}=-1\Big\}\in \mathcal{F}_{V^{(j)}_{1,k}-1};$$ \eqref{eqr2} follows from the fact that $\gamma_{j,k}\le \ell+c -1$ and the probability for $Y$ to make a jump with minimum length $\ell+c-1$ is equal to $Q(\ell+c-1)$. We notice that $\Big\{\Delta\mathcal{C}^{(j)}_{U^{(j)}_{1,k}+1}=-1\Big\}$ and $\Big\{Y_{U^{(j)}_{1,k}}\ge \ell j + c\big\}$ are disjoint and their union is equivalent to the event that $Y$ either hits a cookie in $B_j$ or crosses $\ell j+ c$ before hitting $\ell (j-1)+c-1$ (i.e. hitting $B_{j-1}$). Note also that $c-1$ is the maximum distance from $Y_{T_{1,k}^{(j)}}$ to a cookie in $B_j$ (the worst case scenario is that $Y_{T_{1,k}^{(j)}}=\ell j$ and there is only the last cookie left at $\ell j+c-1$). {Using the stochastic domination between $Y$ and} a simple symmetric random walk staring from $Y_{T_{1,k}^{(j)}}\in B_j$ {(see Remark~\ref{rmk1}.ii)},  we infer that the probability of this union event conditional on $\mathcal{F}_{T^{(j)}_{1,k}}$ and $\Big\{T^{(j)}_{1,k}<\infty,\mathcal{C}^{(j)}_{V^{(j)}_{k-1}}>0\Big\}$ is at least $1-(c-1)/\ell$. This proves \eqref{eq:prob1}. 
\end{proof}

\begin{rmk}\label{rem:cookie}We observe that on the event $\Big\{\mathcal{C}^{(j)}_{V^{(j)}_{k-1}}=0\Big\}$, $V^{(j)}_k<\infty$ if and only if $T^{(j)}_{0,k}=U^{(j)}_{0,k}<\infty$.  Similarly, on the event $\Big\{\mathcal{C}^{(j)}_{V^{(j)}_{k-1}}>0\Big\}$, $V^{(j)}_k<\infty$ if and only if $U^{(j)}_{1,k}<\infty$ if and only if $T^{(j)}_{1,k}<\infty$. 
As a result, the conditioning in \eqref{eq:prob0} and \eqref{eq:prob1} could be performed on the event $\Big\{V^{(j)}_{k-1}<\infty\Big\}$.
\end{rmk}
Having defined the stopping times $(T^{(j)}_k)_{j,k}$, we order them in an increasing sequence $(H_n)_n$. Since $Y_0=0$, we have $H_1=T^{(0)}_1$.

Note that  $(Y_{H_n})_n$ is associated with the movements between ``mega vertices" $(B_j)_j$. Keep in mind that we can not reconstruct from  $\mathcal{E}$ the transition of $(Y_{H_n})_n$ since there are possible forward jumps between non-nearest neighbour ``mega vertices". We  will later show in Proposition~\ref{prop:boh} below that the speed of the nearest-neighbour random walk generated by $\mathcal{E}$ is comparable to the speed of $(Y_{H_n})$.

Notice that the trigger-sequences, $(T,U,V)$, {do not intertwine}.  
\begin{prop}\label{3seq}
If $H_n=T^{(i)}_k$ and $H_{n+1}=T^{(j)}_m$, then
$$ T^{(i)}_k\leq U^{(i)}_k\le V^{(i)}_k\le T^{(j)}_m \quad\text{and}\quad H_n<H_{n+1}.$$
\end{prop}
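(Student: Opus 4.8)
The plan is to argue directly from the confinement of $Y$ during a single trigger sequence, reducing the whole statement to the assertion that no first trigger fires strictly between $T^{(i)}_k$ and $V^{(i)}_k$. The chain $T^{(i)}_k\le U^{(i)}_k\le V^{(i)}_k$ is immediate, since in every branch of the construction the three triggers are defined as successive infima over $\{t\ge T^{(i)}_k\}$, $\{t\ge U^{(i)}_k\}$. For the strict inequality $H_n<H_{n+1}$ I would first record that all the times $(T^{(j)}_k)_{j,k}$ are pairwise distinct. For fixed $j$ one checks in each branch that passing from the first to the third trigger costs at least one unit of time (in the cookie-free branch $Y_{T^{(j)}_{0,k}}=\ell j+c-1\in N_j$, so exiting $N_j$ needs at least one step; in the cookie branch either $V^{(j)}_k=U^{(j)}_k+1$ or an exit from $\widetilde N_j$ or $\widehat N_j$ is required), whence $T^{(j)}_{k+1}\ge V^{(j)}_k>T^{(j)}_k$. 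For $i\neq j$ one has $Y_{T^{(i)}_k}\in B_i$ and $Y_{T^{(j)}_m}\in B_j$ with $B_i\cap B_j=\varnothing$, so these two times cannot coincide. Distinctness then gives $H_n<H_{n+1}$ and reduces what remains to $V^{(i)}_k\le T^{(j)}_m$, i.e. to excluding any first trigger on the open interval $(T^{(i)}_k,V^{(i)}_k)$.

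The core is a confinement argument. Throughout $[T^{(i)}_k,V^{(i)}_k-1]$ the walk stays inside $N_i=[\ell(i-1)+c,\ell(i+1)+c-2]_\Z$ (indeed inside the smaller interval governing the active branch), exactly as used in the proof of Proposition~\ref{lem:lowerb}. Since $|Y_{t+1}-Y_t|\le L$ and $Y$ is skip-free to the left, the only mega vertices meeting $N_i$ are $B_i$ and $B_{i+1}\cap N_i=[\ell(i+1),\ell(i+1)+c-2]_\Z$. For $j\le i-1$ the hitting site of a first trigger (a point of $B_j$, or the single vertex $\ell j+c-1$) lies strictly to the left of $\ell(i-1)+c$, hence outside $N_i$; for $j\ge i+2$ it lies at or beyond $\ell(i+2)>\ell(i+1)+c-2$, again outside $N_i$; and for $j=i$ the next first trigger is $T^{(i)}_{k+1}\ge V^{(i)}_k$ by the recursion. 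So no first trigger with $j\neq i+1$ can fire strictly inside the window.

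The remaining and most delicate case is $j=i+1$, and this is where the precise choice of the third‑trigger interval does the work. When $B_i$ and $B_{i+1}$ both carry a cookie, the third trigger confines $Y$ to $\widehat N_i=[\ell(i-1)+c,\ell(i+1)-1]_\Z$, so $Y$ can reach $B_{i+1}$ only by exiting $\widehat N_i$, that is precisely at time $V^{(i)}_k$; any entry into $B_{i+1}$ is thus not strictly interior. When $B_{i+1}$ is cookie‑free, a first trigger for $B_{i+1}$ must occur at its top vertex $\ell(i+1)+c-1$, which is one step beyond the right endpoint of $N_i$ and so is reachable only upon exiting $N_i$, again at $V^{(i)}_k$ (entering $B_{i+1}$ at a non‑top vertex is then a revisit and fires nothing). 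I expect the genuinely hard point to be the cookie‑free branch of $B_i$ paired with a still‑cookied $B_{i+1}$: there the window is the full $N_i$, which meets $B_{i+1}$, so one must rule out a fresh entry into $B_{i+1}$ strictly before the exit. The route I would attempt is to combine the range structure recorded earlier (a forward jump cannot land on a visited vertex lying beyond unvisited ones, so forward jumps from fresh vertices land on fresh vertices) with the monotonicity of $\mathcal{C}^{(i+1)}_t$, tracking the cookie configuration along the merged sequence $(H_n)$ so as to show that whenever $Y$ is poised at $\ell i+c-1$ in cookie‑free mode, the first trigger of $B_{i+1}$ has necessarily already fired before $T^{(i)}_k$ or else coincides with the exit time $V^{(i)}_k$. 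This cookie‑state bookkeeping across neighbouring mega vertices is the main obstacle, and is where the argument needs the most care.
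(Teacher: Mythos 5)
Most of your reduction is sound, and it is considerably more explicit than the paper's treatment, which consists of the single sentence that the claim ``is straightforward from the definitions'': the chain $T^{(i)}_k\le U^{(i)}_k\le V^{(i)}_k$, the strictness $H_n<H_{n+1}$ (disjointness of the $B_j$'s for $i\ne j$, plus the fact that each trigger sequence costs at least one unit of time), the confinement of $Y$ in $N_i=[\ell(i-1)+c,\ell(i+1)+c-2]_\Z$ before time $V^{(i)}_k$, and the sub-cases of $j=i+1$ that you do settle (cookie branch of $B_i$ with cookied $B_{i+1}$, whose window $[\ell(i-1)+c,\ell(i+1)-1]_\Z$ avoids $B_{i+1}$; and either branch of $B_i$ with cookie-free $B_{i+1}$, whose first trigger sits at $\ell(i+1)+c-1\notin N_i$) are all handled correctly.

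However, the case you flag as ``the genuinely hard point'' --- $B_i$ in the cookie-free branch while $B_{i+1}$ still carries cookies --- is a genuine gap, and the bookkeeping route you sketch cannot close it, because the claim you aim to prove there is false. Take the paper's own example: $c=3$, $\ell=13$, $q$ supported on $\{-1,15\}$. With positive probability the walk follows $0\to 15\to 14\to 13\to 12\to 11\to 10\to 25\to 24\to\cdots\to 16\to 15$, every step being a $-1$ or $+15$ jump from a fresh vertex. At this point all of $B_1=\{13,14,15\}$ and all of $[16,25]_\Z$ have been visited while $B_2=\{26,27,28\}$ is untouched, and the return to $15$ fires the cookie-free first trigger $T^{(1)}_4$ of $B_1$ (its first three sequences consumed the cookies at $15,14,13$). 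The walk, now entirely on visited ground, then takes eleven symmetric $+1$ steps $15\to 16\to\cdots\to 26$: it enters $B_2$ at the fresh vertex $26$, which lies inside $N_1=[3,27]_\Z$, so $T^{(2)}_1$ fires before $B_1$'s window is exited. Since no other first trigger fires in between, this realizes $H_n=T^{(1)}_4$, $H_{n+1}=T^{(2)}_1$ with $T^{(1)}_4<T^{(2)}_1<V^{(1)}_4$, contradicting $V^{(i)}_k\le T^{(j)}_m$. So the scenario you hoped to exclude (walk poised at $\ell i+c-1$ in cookie-free mode with $B_{i+1}$ never visited) is plainly reachable, and no cookie-state tracking can rule it out: the failure lies in the definitions themselves, since the cookie-free third trigger, unlike the cookie-branch one, is not tailored to $\mathcal{C}^{(i+1)}$ (its right endpoint would need to be $\ell(i+1)-1$ when $\mathcal{C}^{(i+1)}>0$ for your confinement argument --- and the proposition as literally stated --- to go through).
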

The proof is straightforward from the definitions of $T^{(j)}_k, U^{(j)}_k$ and $V^{(j)}_k$.
\begin{prop}\label{pr:h}
  There exists  a constant $K$ such that
\begin{equation}\label{eq:boh1}
 {1  \le  \liminf_{n\to\infty} \frac{H_n} n  \le     \limsup_{n\to\infty} \frac{H_n} n \le  K.}
 \end{equation}
 \end{prop}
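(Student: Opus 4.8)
The plan is to treat the two inequalities in \eqref{eq:boh1} separately, the lower bound being elementary and the upper bound being the substantial part. For the lower bound, I would first observe that the family $\{T^{(j)}_k : j\in\Z^+,\ k\in\N,\ T^{(j)}_k<\infty\}$ consists of \emph{distinct} non-negative integers. Indeed, at any fixed time $t$ the position $Y_t$ lies in at most one mega vertex $B_j$, since the $B_j$ are pairwise disjoint (as $\ell\ge 3c$), and every first trigger—whether in the cookie or the cookie-free regime—requires $Y_t\in B_j$ (recall $\ell j+c-1\in B_j$). Thus no two first triggers with distinct indices $j$ can occur simultaneously; and for a fixed $j$ the times $(T^{(j)}_k)_k$ are strictly increasing because $T^{(j)}_k\ge V^{(j)}_{k-1}>T^{(j)}_{k-1}$ (a new sequence can begin only after the previous one has ended, and completing a sequence takes at least one step). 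Hence the enumeration $(H_n)_n$ is strictly increasing, and since $H_1=T^{(0)}_1=0$ we obtain $H_n\ge n-1$, which already gives $\liminf_{n\to\infty}H_n/n\ge 1$.

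For the upper bound, set $G_n:=H_{n+1}-H_n$ and $\mathcal{G}_n:=\mathcal{F}_{H_n}$. The aim is to prove a \emph{uniform} bound on the conditional moments of the gaps, namely $\sup_n\E[G_n\mid\mathcal{G}_n]\le K_0$ and $\sup_n\E[G_n^2\mid\mathcal{G}_n]\le K_1$ for finite constants $K_0,K_1$. Granting this, $d_n:=G_n-\E[G_n\mid\mathcal{G}_n]$ is a martingale-difference sequence with $\E[d_n^2]\le K_1$, so $\sum_n\E[d_n^2]/n^2<\infty$ and the Kolmogorov strong law for orthogonal increments yields $\frac1n\sum_{i\le n}d_i\to0$ almost surely. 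Writing $H_{n+1}=\sum_{i=1}^nG_i=\sum_{i=1}^n\E[G_i\mid\mathcal{G}_i]+\sum_{i=1}^nd_i$, we would conclude $\limsup_{n\to\infty}H_n/n\le K_0=:K$, which completes \eqref{eq:boh1}. Note that this route deliberately avoids any appeal to finiteness of $\E[\tau_2-\tau_1]$, which we cannot assume here since positive speed is exactly what the paper sets out to prove.

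It therefore remains to establish the uniform moment bounds on $G_n$, which is the heart of the matter. Here I would bound $G_n$ by the time needed for $Y$, started from $Y_{H_n}\in B_j$, to finish its current trigger sequence and enter an adjacent mega vertex. Two mechanisms drive this. In the leftward direction the walk moves by nearest-neighbour steps (every already-visited vertex uses $q_0$), so a descent into $B_{j-1}$ is the exit time of an interval of length $O(\ell)$; since each increment of $Y$ has conditional variance bounded below by some $\sigma_0^2>0$ (because $\Lambda$ contains at least two elements), optional stopping applied to $Y_{t}^2-\sigma_0^2 t$ bounds the moments of this exit time uniformly in $j$. In the rightward direction the walk advances by jumps of law $q$ from the frontier, and one checks that the residues of its positions modulo $\ell$ fall into $\{0,\dots,c-1\}$—that is, land inside a mega vertex—within a geometrically bounded number of steps, again giving uniform moment control.

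The main obstacle is precisely the interaction of the long rightward jumps with the mega-vertex structure: because $c$ may be smaller than $L$, a single forward jump can leap \emph{over} a mega vertex without triggering it, so $G_n$ is not literally the exit time of one bounded interval. The crucial point to make rigorous is that these skips cannot suppress the linear accrual of triggers. The lever is that long jumps occur only at the advancing frontier: on any revisit the walk is nearest-neighbour and hence cannot skip any $B_m$ that it passes, while along the frontier a mega vertex is reached within $O(1)$ expected jumps by the residue argument above. Quantifying this geometric control and splicing it together with the leftward SSRW-excursion estimate into the single uniform bound $\E[G_n\mid\mathcal{G}_n]\le K_0$ (and the analogous second-moment bound) is where the real work lies.
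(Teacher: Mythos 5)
Your scaffolding is sound and matches the paper's: the lower bound via strict monotonicity of $(H_n)_n$ is essentially the paper's observation that $H_{n+1}-H_n\ge 1$, and your reduction of the upper bound to uniform conditional moment bounds on the gaps $G_n=H_{n+1}-H_n$ followed by a martingale law of large numbers is exactly what the paper does via its Lemma~\ref{lem:average} (which only needs the second-moment bound $\E[\xi_n^2|\mathcal F_{n-1}]\le K^2$ to conclude $\limsup X_n/n\le K$). But the proposition's actual content is the uniform moment bound itself, and you do not prove it — you explicitly defer it (``where the real work lies''). That deferred step is the whole proof. The paper carries it out by (a) splitting each gap via Proposition~\ref{3seq} as $H_{n+1}-H_n=(T^{(j)}_m-V^{(i)}_k)+(V^{(i)}_k-U^{(i)}_k)+(U^{(i)}_k-T^{(i)}_k)$, so that each of the first two pieces is, by construction of the triggers, the exit time of an interval of length $O(\ell)$ for a process dominating a simple symmetric walk, hence has conditional second moment at most $2\ell 2^{1+4\ell}$ by Lemma~\ref{MeanTimeExit}; and (b) handling the piece $T^{(j)}_m-V^{(i)}_k$ (the one affected by skips over mega vertices) by a landing/success argument: each time the walk enters an interval $[\ell(h-1)+c,\ell h+c-1]_\Z$, with probability at least $\kappa=(q(-1))^{\ell-c}$ it performs enough consecutive left steps to hit a trigger target ($B_h$, or $\ell(h-1)+c-1$, or $\ell h+c-1$) before crossing $\ell h+c$, so the number of landings is dominated by a geometric$(\kappa)$ variable and the total time inherits a uniform second-moment bound $\kappa^{-1}\ell 2^{2\ell+1}$.

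Moreover, the one concrete mechanism you do propose for the rightward control is wrong as stated. You claim the residues of the frontier positions modulo $\ell$ ``fall into $\{0,\dots,c-1\}$ within a geometrically bounded number of steps,'' but nothing forces forward jumps to equidistribute modulo $\ell$: if $q$ puts most of its mass on a single jump length whose forward orbit modulo $\ell$ avoids $\{0,\dots,c-1\}$, the walk can keep leaping over mega vertices for as long as it keeps jumping forward. What saves the argument is not the arithmetic of forward jumps but the left steps: since $q(-1)>0$ and (by Remark~\ref{rmk1}.ii) $q(-1)<1/2$, every step moves left by one with probability at least $q(-1)$ regardless of cookie status, and a run of at most $\ell-c$ consecutive left steps from any landing point guarantees a trigger — this is precisely the paper's $\kappa$. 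So to complete your proof you would need to replace the residue claim by this left-step estimate and then splice it with the trigger decomposition of Proposition~\ref{3seq}; without that, the uniform bound $\E[G_n^2\mid\mathcal G_n]\le K_1$ remains unestablished and the proposition is not proved.
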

 \begin{proof}
The lower bound immediately follows from the fact that $H_{n+1}-H_n\ge1$. Suppose $H_n=T^{(i)}_k$ and $H_{n+1}=T^{(j)}_m$ for some $k$ and $m$ in $\N$. Then Proposition~\ref{3seq} justifies the following decomposition:
$$H_{n+1}-H_n =(T^{(j)}_m-V^{(i)}_k)+ (V^{(i)}_k-U^{(i)}_k)+(U^{(i)}_k-T^{(i)}_k).$$

{Using Lemma~\ref{MeanTimeExit} (see Appendix) together with \eqref{eq:constr} and Remark~\ref{rmk1}.ii}, we immediately obtain that $$\E\Big[\big(U^{(i)}_k-T^{(i)}_k\big)^2\Big|\mathcal{F}_{T^{(i)}_k}\Big]\leq \ell 2^{2\ell+1}\mbox{ and }\E\Big[\big(V^{(i)}_k-U^{(i)}_k\big)^2\Big|\mathcal{F}_{U^{(i)}_k}\Big]\leq 2\ell 2^{1+4\ell}.$$

Suppose that at time $V^{(i)}_k$, the process enters an interval $[\ell(h-1)+c,\ell h+c-1]_\Z$. We say this landing is a success if one of the following events happens:
\begin{itemize}
    \item When $\mathcal{C}^{(h)}_{V^{(i)}_k}>0$, the process $Y$ hits $\ell(h-1)+c-1$ or $B_h$ before crossing $\ell h+c$ (i.e. hitting the interval $[\ell h+c,\infty)_{\Z}$);
     \item When $\mathcal{C}^{(h)}_{V^{(i)}_k}=0$,  the process $Y$ hits $\ell(h-1)+c-1$ or $\ell h+c-1$ before crossing $\ell h+c$.
\end{itemize}
Otherwise, it is considered a failure.
It is clear that the probability of success is bounded from below by $\kappa=(q(-1))^{\ell-c}$.
Furthermore the time it takes to register the status of the landing has a second moment bounded from above by $\ell2^{2\ell+1}$.
On the event that $Y$ crosses $\ell h+c$, it enters another interval of the form $[\ell(\eta-1)+c,\ell \eta+c-1]_\Z$ with the same bounds on the probability of a successful landing and on the time to register the status of the landing. The number of landings needed to reach success is stochastically dominated by a geometric random variable with probability of success equal $\kappa$. It follows that
$$\E\Big[\big(T^{(j)}_m-V^{(i)}_k\big)^2\Big|\mathcal{F}_{V^{(i)}_k}\Big]\le \kappa^{-1} \ell2^{2\ell+1}.$$
Using Lemma~\ref{lem:average} (see Appendix), we show that for each term, the sequence of time averages has a bounded $\limsup$. The result immediately follows.
 \end{proof}

Let $X^{\mathcal E}$ be the nearest neighbour random walks constructed from arrow systems $\mathcal E$ in the manner of Section~\ref{sec:ArrowSys}.

\begin{prop}\label{prop:boh}
There exists a constant $\rho>0$ such that a.s.
\begin{equation}
    \limsup_{n\to\infty}\frac{Y_{H_{n}}}{n} \ge \rho 
    \limsup_{n\to\infty} \frac{X^{\mathcal E}_n}{n}.
\end{equation}
\end{prop}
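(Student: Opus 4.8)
The plan is to reduce the statement to a comparison between two processes driven by the \emph{same} array $\mathcal E$, and then to exploit that $Y$ moves between mega vertices by nearest-neighbour steps to the left but by possibly long jumps to the right. Write $a_n$ for the index of the mega vertex recorded at the $n$-th trigger, i.e. $H_n=T^{(a_n)}_k$ for some $k$, so that $Y_{H_n}\in B_{a_n}$ and hence $Y_{H_n}\ge \ell a_n$. Consequently $\limsup_{n}Y_{H_n}/n\ge \ell\,\limsup_{n}a_n/n$, and it suffices to prove that $\limsup_n a_n/n\ge \limsup_n X^{\mathcal E}_n/n$; the constant $\rho=\ell$ will then do.

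First I would record the structural facts that drive the comparison. By Proposition~\ref{3seq} the trigger sequences do not intertwine, so $(a_n)$ is well defined and each trigger consumes exactly one entry $\mathcal E(a_n,\cdot)$ of the array, read in stack order. Because $\inf(\Lambda)=-1$ (Assumption~\ref{asm:1}), every leftward step of $Y$ has length one, so $Y$ can never leap over a mega vertex to the left; inspecting the update rule, $\mathcal E(a_n,k)=-1$ forces $a_{n+1}=a_n-1$, while $\mathcal E(a_n,k)=+1$ forces $a_{n+1}\ge a_n+1$, with strict inequality exactly when a long jump carries $Y$ over one or more mega vertices. The decisive observation is that $X^{\mathcal E}$ is the nearest-neighbour walk reading the \emph{same} array $\mathcal E$ in the same stack order: were every forward increment of $(a_n)$ equal to $+1$, the two would be the identical deterministic function of $\mathcal E$. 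Thus $(a_n)$ is precisely $X^{\mathcal E}$ with some $+1$ increments replaced by longer forward jumps.

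The heart of the proof is the first-passage comparison
\[
N_Y(m):=\min\{n\colon a_n\ge m\}\ \le\ \min\{t\colon X^{\mathcal E}_t=m\}=:N_X(m)\qquad\text{for every } m,
\]
both quantities counting arrow-reads needed to reach level $m$. I would prove this by a coupling/induction that isolates the reads at each record level: since neither process visits a level before first reaching it, both begin reading that level's stack at position $1$, and to push the running maximum past a level $p$ each must read the same initial segment of $\mathcal E(p,\cdot)$ up to its first $+1$. The forward jumps of $(a_n)$ can only shorten the intervening downward excursions — leftward moves are nearest-neighbour for both walks, whereas a long jump may carry $a$ over level $p$ without ever reading a $+1$ there — so $(a_n)$ reaches $m$ using no more reads than $X^{\mathcal E}$. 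I expect this coupling to be the main obstacle: a naive pathwise domination $a_n\ge X^{\mathcal E}_n$ at matched counts \emph{fails}, because a long jump desynchronises the two stacks and $X^{\mathcal E}$ may momentarily sit above $a$. The comparison must therefore be carried out at the level of first-passage times (equivalently, of the records of the running maxima), not of the positions themselves, and the skip-induced desynchronisation is exactly what makes a clean termwise induction unavailable.

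Finally I would convert the first-passage inequality into the speed comparison. Since $Y$ is transient to the right, $a_n\to\infty$, so the running maximum $\bar a_n=\max_{k\le n}a_k$ is non-decreasing with $\bar a_n\to\infty$ and generalised inverse $N_Y$; the standard generalised-inverse identity gives $\limsup_n a_n/n=\limsup_n\bar a_n/n=\limsup_m m/N_Y(m)$, and likewise $\limsup_t X^{\mathcal E}_t/t=\limsup_m m/N_X(m)$ in the only nontrivial case, namely when $X^{\mathcal E}$ is transient to $+\infty$ (otherwise $\limsup_t X^{\mathcal E}_t/t\le 0$ and there is nothing to prove). Combining with $N_Y(m)\le N_X(m)$ yields $\limsup_n a_n/n\ge\limsup_t X^{\mathcal E}_t/t$, and together with $Y_{H_n}\ge \ell a_n$ this establishes the proposition with $\rho=\ell$.
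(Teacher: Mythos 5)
Your reduction to comparing the block-index process $(a_n)$ with $X^{\mathcal E}$, and your identification of jump-induced stack desynchronisation as the main difficulty, are both on target; but the structural dichotomy on which your whole coupling rests is false. It is not true that $\mathcal E(a_n,k)=-1$ forces $a_{n+1}=a_n-1$. In the cookie case, the third trigger $V^{(j)}_k$ can be realised by $Y$ jumping from a cookie inside $B_j$, and the arrow is set to $-1$ whenever the landing point is below $\ell(j+1)$ (resp.\ $\ell(j+1)+c-1$). That landing point can lie inside $B_j$ itself or in the gap $[\ell j+c,\ell(j+1)-1]_\Z$ (or at $\ell j-1$, in the gap below): in the first case the next trigger occurs again at $B_j$, so $a_{n+1}=a_n$; in the other cases the walk may subsequently move right and enter, or even jump over, $B_{j+1}$, so $a_{n+1}\ge a_n+1$ despite the $-1$ read. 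The paper states this explicitly when it introduces the times $\tau_n$: the sequence $(Y_{H_n})_n$ ``does not necessarily'' change mega vertex between consecutive triggers, which is incompatible with your claim that every $-1$ read produces a unit step to the left. Consequently $(a_n)$ is not ``$X^{\mathcal E}$ with some $+1$ increments replaced by longer forward jumps''; it also has stay-puts and upward moves on $-1$ reads, and your first-passage induction, which leans on ``leftward moves are nearest-neighbour for both walks'', collapses exactly there. This is also why the paper must control how many triggers are wasted at a single mega vertex (Proposition~\ref{lem:lowerb} giving a stay probability at most $1/2$, then Lemma~\ref{lem:average} to get $\limsup_n\tau_n/n<\infty$); in your accounting this control is absent, and the constant $\rho$ cannot simply be $\ell$.

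Even after one corrects the dichotomy (all deviations of $(a_n)$ from nearest-neighbour behaviour are ``rightward-favourable''), the inequality $N_Y(m)\le N_X(m)$ is precisely the kind of statement whose proof is the content of the Holmes--Salisbury theorem: because of desynchronisation, a process that saved reads early will later pop different, possibly less favourable, entries of the stacks than $X^{\mathcal E}$ does, so no termwise or excursion-by-excursion induction closes, and your sketch replaces the crucial step by the assertion that forward jumps ``can only shorten the intervening downward excursions''. The paper avoids proving any such counting lemma: it passes to the subsequence $\tau_n$, encodes the actual block transitions into genuine arrow systems $\mathcal H$ (departures only) and $\mathcal K$ (with extra $+1$'s inserted at jumped-over levels), verifies the dominances $\mathcal K\succeq\mathcal H\succeq\mathcal E$, and applies Theorem~\ref{th:order} twice, finishing with a time change relating $X^{\mathcal K}$ to $Y_{H_{\tau_n}}$. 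Finally, a smaller but real gap: your last step assumes that if $X^{\mathcal E}$ is not transient to $+\infty$ then $\limsup_t X^{\mathcal E}_t/t\le 0$. For arrow-driven walks this is false in general (a nearest-neighbour path making excursions to heights $4^k$ and returning to $0$ has positive upper speed without being transient), so this case reduction would itself require proof; the paper's use of Theorem~\ref{th:order}, whose hypothesis is only $\limsup_t X^{\mathcal E}_t\ge x$, sidesteps the issue.
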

\begin{proof}
Let $(\tau_n)_n$ be the random sequence defined recursively as follows: $\tau_0=H_0=0$, and
$$\tau_{n+1}=\inf\{m\ge \tau_n: Y_{H_m} \text{ and } Y_{H_{\tau_n}} \text{ are not in the same ``mega vertex"}\} .$$
This definition ensures that  $Y_{H_{\tau_{n+1}}}$ and $Y_{H_{\tau_{n}}}$ are not in the same ``mega vertex" for all $n$, whereas  the random sequence  $(Y_{H_{n}})_{n}$ does not necessarily satisfy this condition. 

Note that $\tau_n=\sum_{k=1}^n(\tau_{k}-\tau_{k-1})$. On the event $\{ H_n=T_{k}^{(j)}\}\subset \{ Y_{H_{n}}\in B_{j}\}$ for some $j\in \Z$ and $k\in \N$, we have
\begin{align*} & \P\Big( Y_{H_{n+1}}\in B_{j} \ | \
\mathcal{F}_{H_n}   \Big) 
 \le \P \Big(\mathcal{E}(j,k)=-1 \ |\ \mathcal{F}_{T_{k}^{(j)}}  \Big)
% & = \E\Big[\P\Big(\mathcal{E}(j,k)=-1 \ | \ \mathcal{F}_{T_{0,k}^{(j)}} \Big)\1_{\Big\{\mathcal{C}^{(j)}_{V^{(j)}_{k-1}}=0\Big\}}+\P\Big(\mathcal{E}(j,k)=-1 \ | \ \mathcal{F}_{T_{1,k}^{(j)}}  \Big)\1_{\Big\{\mathcal{C}^{(j)}_{V^{(j)}_{k-1}}>0\Big\}}  \big| \mathcal{F}_{V_{k-1}^{(j)}}\Big] 
\le 1/2,
\end{align*}
where the last inequality directly follows from Proposition~\ref{lem:lowerb}.
As a result, conditioning on ${\mathcal{F}_{H_{\tau_n}}}$, $\tau_{n+1}-\tau_{n}$ is dominated by a geometrically distributed random variable with parameter $1/2$. Hence, $\E[(\tau_{n+1}-\tau_{n})^2 | \mathcal{F}_{H_{\tau_{n}}}]\le 6$.
Using Lemma~\ref{lem:average} (see Appendix), we obtain that a.s.
\begin{equation}\label{lln.tau}
\limsup_{n\to\infty} \frac{\tau_n}{n} <\infty.
\end{equation}

We define the arrow systems $\mathcal H$ and $\mathcal K$ associated with the process $(Y_{H_{\tau_n}})_{n}$ as follows. We define the random sequence $(j_n)_n$ as  $Y_{H_{\tau_n}}\in B_{j_n}$ for each $n$. We initially set $\mathcal{H}(j,k)=\mathcal{K}(j,k)=+1$ for all $j\in \Z, k\in \N$. We will update $\mathcal{H}$ and $\mathcal{K}$ corresponding to block-transitions of $(Y_{H_{\tau_n}})_n$. Assume that for each $i\in\Z$ there are $k_i$ (resp. $h_i$) updated arrows in the arrow stack at $i$ in $\mathcal{K}$ (resp. $\mathcal{H}$). We  distinguish two cases:
\begin{itemize}
 \item If $j_{n+1}>j_n$, we keep $\mathcal{H}(j_n, h_{j_n}+1)=+1$ and keep $\mathcal{K}(i, k_{i}+1)=+1$ for all $i\in[j_n,j_{n+1}-1]_{\Z}$. 
\item If $j_{n+1}=j_n-1$, we change  $\mathcal{H}(j_n, h_{j_n}+1)=\mathcal{K}(j_n, k_{j_n}+1)= -1$ for both of the systems.
\end{itemize}
Notice that all the remaining elements of $\mathcal H$ and $\mathcal K$ which cannot be updated by block-transitions of $(Y_{H_n})_{n\ge 1}$ (due to the transience of $Y$) will be kept as $+1$.
%%%%%%%%%%%%%%%%%%%%%%%%%%%%%%%%%%%%%%%%%%%
%%%Coupling graph begins
%%%%%%%%%%%%%%%%%%%%%%%%%%%%%%%%%%%%%%%%%%%
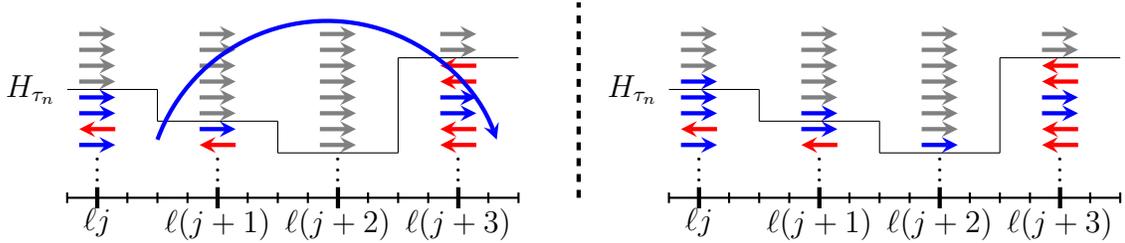
\begin{figure}[h]
\centering
\tikzstyle{cookie} = [circle, draw=black, fill=blue!30]
\tikzstyle{vertex} = [circle, draw=black, fill=red!30]
\tikzstyle{arrow} = [ultra thick,->,>=stealth]
\begin{tikzpicture}[x=0.4cm, y=1pt]
\draw[thick] (0,-12) -- (15,-12);
\foreach \x in {0,...,15}
   \draw[thick] (\x,-10) -- (\x,-14);
\foreach \x in {-1,...,2}
    \draw[ultra thick] (4*\x+5,-8) -- (4*\x+5,-16);
\node [below] at (1,-12) {$\ell j$};
\node [below] at (5,-12) {$\ell(j+1)$};
\node [below] at (9,-12) {$\ell(j+2)$};
\node [below] at (13,-12) {$\ell(j+3)$};
\node [above] at (1,-10) {$\vdots$};
\node [above] at (5,-10) {$\vdots$};
\node [above] at (9,-10) {$\vdots$};
\node [above] at (13,-10) {$\vdots$};

\draw[arrow,blue] (0.4,8) -- +(1.2,0);
\draw[arrow,red] (1.6,14) -- +(-1.2,0);
\draw[arrow,blue] (0.4,20) -- +(1.2,0);
\draw[arrow,blue] (0.4,26) -- +(1.2,0);
\foreach \y in {1,...,4}
    \draw[arrow,gray] (0.4,6*\y+26) -- +(1.2,0);

\draw[arrow,red] (5.6,8) -- +(-1.2,0);
\draw[arrow,blue] (4.4,14) -- +(1.2,0);
\foreach \y in {1,...,6}
    \draw[arrow,gray] (4.4,6*\y+14) -- +(1.2,0);

\foreach \y in {1,...,8}
    \draw[arrow,gray] (8.4,6*\y+2) -- +(1.2,0);
    
\draw[arrow,red] (13.6,8) -- +(-1.2,0);
\draw[arrow,red] (13.6,14) -- +(-1.2,0);
\draw[arrow,blue] (12.4,20) -- +(1.2,0);
\draw[arrow,blue] (12.4,26) -- +(1.2,0);
\draw[arrow,red] (13.6,32) -- +(-1.2,0);
\draw[arrow,red] (13.6,38) -- +(-1.2,0);
\foreach \y in {1,...,2}
    \draw[arrow,gray] (12.4,6*\y+38) -- +(1.2,0);

\draw (0,29) -- (3,29);
\draw (3,29) -- (3,17);
\draw (3,17) -- (7,17);
\draw (7,17) -- (7,5);
\draw (7,5) -- (11,5);
\draw (11,5) -- (11,41);
\draw (11,41) -- (15,41);
\node [left] at (0,29) {$H_{\tau_n}$};
\draw[arrow,blue] (3,10) arc (160:20:2.4cm);

%%%%%%%%%%%%%%%%%%%%%%%%%%%%%
\draw[dashed,ultra thick] (17,-12) -- (17,62);
%%%%%%%%%%%%%%%%%%%%%%%%%%%%%

\draw[thick] (20,-12) -- (35,-12);
\foreach \x in {0,...,15}
   \draw[thick] (\x+20,-10) -- (\x+20,-14);
\foreach \x in {-1,...,2}
    \draw[ultra thick] (4*\x+25,-8) -- (4*\x+25,-16);
\node [below] at (21,-12) {$\ell j$};
\node [below] at (25,-12) {$\ell(j+1)$};
\node [below] at (29,-12) {$\ell(j+2)$};
\node [below] at (33,-12) {$\ell(j+3)$};
\node [above] at (21,-10) {$\vdots$};
\node [above] at (25,-10) {$\vdots$};
\node [above] at (29,-10) {$\vdots$};
\node [above] at (33,-10) {$\vdots$};

\draw[arrow,blue] (20.4,8) -- +(1.2,0);
\draw[arrow,red] (21.6,14) -- +(-1.2,0);
\draw[arrow,blue] (20.4,20) -- +(1.2,0);
\draw[arrow,blue] (20.4,26) -- +(1.2,0);
\draw[arrow,blue] (20.4,32) -- +(1.2,0);
\foreach \y in {1,...,3}
    \draw[arrow,gray] (20.4,6*\y+32) -- +(1.2,0);

\draw[arrow,red] (25.6,8) -- +(-1.2,0);
\draw[arrow,blue] (24.4,14) -- +(1.2,0);
\draw[arrow,blue] (24.4,20) -- +(1.2,0);
\foreach \y in {1,...,5}
    \draw[arrow,gray] (24.4,6*\y+20) -- +(1.2,0);

\draw[arrow,blue] (28.4,8) -- +(1.2,0);
\foreach \y in {1,...,7}
    \draw[arrow,gray] (28.4,6*\y+8) -- +(1.2,0);

\draw[arrow,red] (33.6,8) -- +(-1.2,0);
\draw[arrow,red] (33.6,14) -- +(-1.2,0);
\draw[arrow,blue] (32.4,20) -- +(1.2,0);
\draw[arrow,blue] (32.4,26) -- +(1.2,0);
\draw[arrow,red] (33.6,32) -- +(-1.2,0);
\draw[arrow,red] (33.6,38) -- +(-1.2,0);
\draw[arrow,gray] (32.4,44) -- +(1.2,0);
\draw[arrow,gray] (32.4,50) -- +(1.2,0);

\draw (20,29) -- (23,29);
\draw (23,29) -- (23,17);
\draw (23,17) -- (27,17);
\draw (27,17) -- (27,5);
\draw (27,5) -- (31,5);
\draw (31,5) -- (31,41);
\draw (31,41) -- (35,41);
\node [left] at (20,29) {$H_{\tau_n}$};
\end{tikzpicture}
\caption{
The arrow system $\mathcal K$: before (left) and after (right) the forward block-jump of $(Y_{H_{\tau_n}})_n$ from $B_{j}$ to $B_{j+3}$.  For illustrative purposes, we put the arrow stack $\mathcal K(j,.)$ at   $\ell j$. We paint right-pointing arrows in green, left-pointing arrows in red and not updated arrows in gray.
}
\label{fig:couplingA}
\end{figure}
%%%%%%%%%%%%%%%%%%%%%%
% Coupling graph ends
%%%%%%%%%%%%%%%%%%%%%%

Let $X^{\mathcal H}, X^{\mathcal K}$ be  the nearest neighbour random walks constructed from arrow systems $\mathcal H$ and $\mathcal K$ respectively. By the above construction, $\mathcal K$ can be obtained from $\mathcal{H}$ by inserting more +1 elements associated with long forward jumps of $(Y_{H_n})_n$. Hence, we have $\mathcal K \succeq \mathcal H$. By virtue of Theorem~\ref{th:order},  
\begin{equation}\label{ineq1}
\limsup_{n\to\infty} \frac{X^{\mathcal K}_n}{n}\ge \limsup_{n\to\infty} \frac{X^{\mathcal H}_n}{n}.
\end{equation}
Let $\mathcal L^+_{\mathcal H}(n,i)$ (resp. $\mathcal L^+_{\mathcal E}(n,i)$) be the number of $+1$ in the set $\{\mathcal H(i, k): k\in[n]\}$ (resp. $\{\mathcal K(i, k): k\in[n]\}$). Note that $\mathcal L^+_{\mathcal E}(n,i) \le \mathcal L^+_{\mathcal H}(n,i)$ for each $n$ and $i$. Hence, we have $\mathcal H \succeq \mathcal E$. Applying Theorem~\ref{th:order} again, we have
\begin{equation}\label{ineq2}\limsup_{n\to\infty} \frac{X^{\mathcal H}_n}{n}\ge  \limsup_{n\to\infty} \frac{X^{\mathcal E}_n}{n}.\end{equation}

Let $(\sigma_n)_n$ be the random times defined by $\sigma_0=0$ and  $\sigma_n=|j_{n}-j_{n-1}|$ for all $n\ge 1$. Note that $\ell X^{\mathcal K}_{\sigma_n} \le Y_{H_{\tau_n}} \le \ell X^{\mathcal K}_{\sigma_n} +c-1 $.
Hence $$\limsup_{n\to\infty}\frac{Y_{H_{n}}}{n} \ge \limsup_{n\to\infty}\frac{Y_{H_{\tau_n}}}{\tau_n}=\ell \limsup_{n\to\infty}\frac{X^{\mathcal K}_{\sigma_n}}{\tau_n}.$$
On the other hand, we have either $X^{\mathcal K}_{\sigma_{n+1}}=X^{\mathcal K}_{\sigma_{n}}-1$ or   $X^{\mathcal K}_{\sigma_n} \le X^{\mathcal K}_{k}\le X^{\mathcal K}_{\sigma_{n+1}}$ for any $k\in[\sigma_n,\sigma_{n+1}]$. It immediately follows that, 
$\limsup_{n\to\infty}{X^{\mathcal K}_{\sigma_n}}/{\sigma_n} = \limsup_{n\to\infty} {X^{\mathcal K}_{n}}/{n}.$ Hence,
\begin{equation}\label{ineq3}  \limsup_{n\to\infty}\Big(\frac{\tau_n}{n}\Big)  \limsup_{n\to\infty}\frac{Y_{H_{n}}}{n} \ge \ell \limsup_{n\to\infty}\Big( \frac{\tau_n}{n}\cdot\frac{n}{\sigma_n}\cdot \frac{X^{\mathcal K}_{\sigma_n}}{\tau_n}\Big)  =\ell \limsup_{n\to\infty}\frac{X^{\mathcal K}_{n}}{n} 
\end{equation}
(Here we use the fact that for any non-negative sequences $(a_n)_n, (b_n)_n$, the inequality $$ \limsup_{n\to\infty}(a_n b_n)\le \limsup_{n\to\infty}(a_n) \limsup_{n\to\infty}(b_n)$$ holds whenever $\limsup_{n\to\infty}a_n< \infty$). Combining  \eqref{lln.tau}-\eqref{ineq1}-\eqref{ineq2}-\eqref{ineq3}, we conclude the lemma. 
\end{proof}

\begin{proof}[Proof of Theorem~\ref{th:mainth}]

Let
$\eps' = 1-\left(1- ({c-1})/{\ell}\right)Q(\ell+c-1),$
where $c$ and $\ell$ satisfy the assumptions in Theorem~\ref{th:mainth}. In particular \eqref{eq:epsell} implies that  $$ \delta^* :=  c(1-2\eps') >2.$$
We first construct an arrow system $\mathcal M$ which generates a nearest neighbour $c$-cookie random walk $X^{{\mathcal{M}}}$ such that its cookie environment has expected total drift $\delta^*>2$
and 
\begin{equation}
\label{eq:dom} \P\Big(
\limsup_{n\to\infty}\frac{X^{{\mathcal{M}}}_n}{n}>0 \Big) \le \P\Big(\limsup_{n\to\infty}\frac{X^{{\mathcal{E}}}_n}{n}>0\Big).
\end{equation}

Assume $H_{n-1}=T_m^{(i)}<H_n=T_k^{(j)}<\infty$ for some $k,m\in \N$ and $i, j\in \Z$. Note that $V_{k-1}^{(j)}\le V_m^{(i)} < T_{k}^{(j)}\le V_{k}^{(j)}$. By virtue of Proposition~\ref{lem:lowerb} and Remark~\ref{rem:cookie}, we have 
\begin{align*}
   \P \Big(\mathcal{E}(j,k)= 1 \ |\ \mathcal{F}_{V_m^{(i)}}  \Big)
& \ge \frac{1}{2}\1_{\Big\{ \mathcal{C}^{(j)}_{V^{(j)}_{k-1}}=0\Big\}}+   \Big(1-\frac{c-1}{\ell}\Big)Q(\ell+c-1)\1_{ \Big\{ \mathcal{C}^{(j)}_{V^{(j)}_{k-1}}>0\Big\}}.
\end{align*}
Also note that if $Y$ hits vertices in $B_j$ at least $c$ times, the inequality $\mathcal{C}^{(j)}_{V_{k-1}^{(j)}}>0$ must hold for at least $c$ different values of $k$. Therefore,
$\P \left(\mathcal{E}(j,k)= 1 \ |\ \mathcal{F}_{V_{m}^{(i)}}\right)$
 is at least $(1-(c-1)/\ell)Q(\ell+c-1)$ for $1\le k\le c$ and at least $1/2$ for $k\ge c+1$. By reason of Lemma~\ref{lem.dom} (see Appendix), there exist two arrow systems $\mathcal M$ and $\widehat{\mathcal{E}}$  such that 
 \begin{itemize}
 \item $\widehat{\mathcal{E}}$ has the same distribution as ${\mathcal{E}}$;
 \item $\mathcal M (j,k)\le \widehat{\mathcal{E}} (j,k)$ a.s. for all $j\in \Z$ and $k\in \N$,
\item $({\mathcal{M}}(j,k))_{j\in \Z, k\in N}$ are independent random variables such that $$\P(\mathcal{M}(j,k)=1)=(1-(c-1)/\ell)Q(\ell+c-1)\text{
 for }1\le k\le c\text{ and }\P({\mathcal{M}}(j,k)=1)=1/2\text{ for }k\ge c+1.$$ 
 \end{itemize}
 As a result, $\widehat{\mathcal{E}}\succeq \mathcal{M}$ (a.s.) and by virtue of Theorem~\ref{th:order}, we thus obtain \eqref{eq:dom}. 
 
 We also notice that the nearest-neighbour random walk $X^{{\mathcal M}}$ generated by the arrow system ${\mathcal{M}}$ is a $c$-cookie random walk and its cookie environment has expected total drift $\delta^*>2$.
Using Theorem 1.1. in \cite{BasS1}, we have that a.s. $\lim_n X^{{\mathcal{M}}}_n/n$ exists and is positive. Hence, this fact and \eqref{eq:dom} imply that a.s. $\limsup_n X^{{\mathcal{E}}}_n/n>0$.

On the other hand, Proposition~\ref{prop:boh} implies that  a.s.
$$0<\limsup_{n\to \infty} \rho \frac{X_n^{{\mathcal{E}}}}{n} \le  \limsup_{n \ti} \frac{Y_{H_n}}{n}= \limsup_{n \ti} \frac{Y_{H_n}}{H_n} \frac{H_n}n \le \limsup_{n \ti} \frac{Y_{H_n}}{H_n} \limsup_{n \ti} \frac{H_n}n.$$
Using Proposition~\ref{pr:h} we have that $\limsup H_n/n$ is bounded by a constant. Hence, $\limsup Y_n/n >0$ (a.s.).
By Proposition~\ref{prop:exs}, the speed for $Y$, $\lim Y_n/n$, exists. This ends the proof.
\end{proof}

\section*{Appendix}

\begin{lem}\label{MeanTimeExit}
Let $\mathcal{G}$ be a $\sigma$-field, $$X_n=X_0+\sum_{k=1}^n\phi_k(X_{[k-1]},\zeta_k),$$ where $X_0$ is $\mathcal{G}$-measurable, $\zeta_1,\zeta_2,\ldots$ are independent uniform $[0,1]$ random variables independent of $\mathcal{G}$ and $\phi_k(x_{[k-1]},u)\ge 1_{u\le1/2}-1_{u>1/2}$, and $x_{[k]}=(x_0,\ldots,x_k)$ denotes the path of the sequence $(x_n)_n$ up to instant $k$. Then the mean time for the process $X$ to exit a bounded interval is finite. In fact, for $a,b\in\Z$, $a<b$, if $T=\inf\{n\ge0:X_n\le a\mbox{ or }X_n\ge b\}$, then $$\E[T^2|\mathcal{G}]\leq (b-a)2^{2(b-a)+1}.$$
\end{lem}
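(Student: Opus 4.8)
The plan is to exploit the pathwise lower bound on the increments rather than any Markovian exit-time formula. By hypothesis $\phi_k(x_{[k-1]},u)\ge \1_{u\le 1/2}-\1_{u>1/2}$, so on $\{\zeta_k\le 1/2\}$ the realised increment is at least $+1$, \emph{whatever} the path-dependent form of $\phi_k$. Consequently a run of $b-a$ consecutive indices with $\zeta_k\le 1/2$ forces $X$ to increase by at least $b-a$, pushing it out of any interval of width $b-a$ through the upper boundary. It is worth noting why the obvious coupling fails: writing $W_n:=X_0+\sum_{k\le n}(\1_{\zeta_k\le 1/2}-\1_{\zeta_k>1/2})$ for the symmetric walk, one does get $X_n\ge W_n$ pathwise, but this only controls the \emph{upper} exit of $X$ and gives no information on how (or whether) $X$ leaves through $a$; hence the two-sided exit time of $W$ does not dominate $T$. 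I therefore use only the forced up-crossing, which can only decrease $T$ and so suffices for an upper bound.

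Next I would run a block argument. Put $L:=b-a$ and $p:=2^{-L}$, and split time into the windows $[mL,(m+1)L)$, $m\ge 0$. On $\{T>mL\}$ we have $X_{mL}>a$, and on the event $A_m:=\bigcap_{i=1}^{L}\{\zeta_{mL+i}\le 1/2\}$ each of the next $L$ increments is $\ge 1$, so $X_{(m+1)L}\ge X_{mL}+L> a+L=b$; thus $A_m$ forces $T\le (m+1)L$. Since $A_m$ depends only on $\zeta_{mL+1},\dots,\zeta_{mL+L}$, it is independent of $\mathcal G$ and of $\mathcal F_{mL}:=\sigma(\mathcal G,\zeta_1,\dots,\zeta_{mL})$, while $\{T>mL\}\in\mathcal F_{mL}$; hence $\P(A_m\mid\mathcal F_{mL})=p$ and
$$\P\big(T>(m+1)L\ \big|\ \mathcal F_{mL}\big)\le (1-p)\,\1_{\{T>mL\}}.$$
Taking conditional expectation given $\mathcal G$ and iterating yields the geometric tail $\P(T>mL\mid\mathcal G)\le (1-p)^m$ for every $m\ge 0$.

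Finally I would convert this tail into the second-moment estimate through
$$\E\big[T^2\ \big|\ \mathcal G\big]=\sum_{n\ge 0}(2n+1)\,\P\big(T>n\ \big|\ \mathcal G\big),\qquad \P(T>n\mid\mathcal G)\le (1-p)^{\floor{n/L}}.$$
Equivalently, the number of windows needed for a success is conditionally dominated by a Geometric$(p)$ variable, whose first two moments are of order $p^{-1}$ and $p^{-2}$; summing the resulting geometric series produces a bound of the exponential type asserted in the statement, with the factor $2^{2(b-a)+1}$ coming from $p^{-2}=2^{2(b-a)}$. All of this is routine once the mechanism is in place; the one genuinely substantive point — and the reason the standard gambler's-ruin computation is unavailable — is the first step, namely recognising that the arbitrary path-dependence of $\phi_k$ must be neutralised by forcing exit through a deterministic run of favourable coin-flips, at the (harmless) cost of an exponential rather than polynomial constant.
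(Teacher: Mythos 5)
Your proposal is correct and takes essentially the same route as the paper's own proof: the paper likewise bounds $T$ by $(b-a)\,G$, where $G$ is the index of the first window of length $b-a$ in which every $\zeta_k<1/2$, observes that $G$ is geometric with parameter $(1/2)^{b-a}$ and independent of $\mathcal{G}$, and concludes. One minor remark: both your argument and the paper's actually yield the constant $(b-a)^2 2^{2(b-a)+1}$ (from $T^2\le (b-a)^2G^2$ and $\E[G^2]\le 2\cdot 2^{2(b-a)}$), so the stated bound $(b-a)2^{2(b-a)+1}$ seems to drop a factor of $b-a$; this is immaterial for the lemma's use, where only finiteness of a uniform bound matters.
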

\begin{proof}
$T\le G(b-a)$ where $G=\inf\{n\ge1:\zeta_{1+(n-1)(b-a)}<1/2,\ldots,\zeta_{n(b-a)}<1/2\}$. The result follows from the fact that $G$ is geometric $(1/2)^{b-a}$ and independent of $\mathcal{G}$.
\end{proof}

\begin{lem}\label{lem:average}
Let $(\xi_n)_n$ be an arbitrary sequence of random variables adapted to a filtration $(\mathcal{F}_n)_n$ and let
$X_n=\sum_{k=1}^n\xi_k.$
Suppose there exists a positive constant $K$ such that, for each $n$,
\begin{equation}\label{eq:condvar}
\mathbb{E}[\xi_n^2|\mathcal{F}_{n-1}]\le K^2\quad (a.s.).
\end{equation}
Then
$$\limsup_{n\to\infty}\frac{X_n}n\le K\quad (a.s.).$$
\end{lem}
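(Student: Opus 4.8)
The plan is to split each increment into its predictable (conditional-mean) part and a martingale-difference part, control the former deterministically and the latter by a martingale strong law. Concretely, I would write $\xi_n = a_n + d_n$, where $a_n := \E[\xi_n \mid \mathcal{F}_{n-1}]$ and $d_n := \xi_n - a_n$. The hypothesis $\E[\xi_n^2\mid\mathcal{F}_{n-1}]\le K^2$ (taking expectations) forces $\xi_n\in L^2$ for every $n$, so $a_n$ and $d_n$ are well defined in $L^2$, and $(d_n)_n$ is a martingale-difference sequence with respect to $(\mathcal{F}_n)_n$.

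For the predictable part, the conditional Cauchy--Schwarz (equivalently Jensen) inequality gives $a_n = \E[\xi_n\mid\mathcal{F}_{n-1}] \le \sqrt{\E[\xi_n^2\mid\mathcal{F}_{n-1}]} \le K$ almost surely. Consequently $\frac1n\sum_{k=1}^n a_k \le K$ for every $n$, which is exactly the deterministic bound that will survive in the $\limsup$.

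For the martingale part, observe that $\E[d_n^2\mid\mathcal{F}_{n-1}] = \E[\xi_n^2\mid\mathcal{F}_{n-1}] - a_n^2 \le K^2$. I would then introduce the normalized martingale $N_n := \sum_{k=1}^n d_k/k$, whose increments satisfy $\E[(d_k/k)^2] \le K^2/k^2$, so that $\sup_n \E[N_n^2] \le K^2\sum_{k\ge1}k^{-2}<\infty$. Thus $(N_n)_n$ is $L^2$-bounded and converges almost surely by the martingale convergence theorem. An application of Kronecker's lemma (with the weights $b_k = k \uparrow \infty$) then upgrades this to $\frac1n\sum_{k=1}^n d_k \to 0$ almost surely.

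Combining the two pieces gives $\frac{X_n}{n} = \frac1n\sum_{k=1}^n a_k + \frac1n\sum_{k=1}^n d_k \le K + o(1)$ almost surely, and hence $\limsup_{n\ti} X_n/n \le K$. The argument is essentially standard, so I do not expect a serious obstacle; the only points demanding a little care are checking integrability (that the conditional expectations are genuinely defined, which the $L^2$ bound supplies) and invoking Kronecker's lemma correctly to pass from almost-sure convergence of the weighted series $\sum_k d_k/k$ to Ces\`aro control of $\frac1n\sum_{k\le n} d_k$.
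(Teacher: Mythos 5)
Your proof is correct, and it follows the same overall strategy as the paper: decompose $X_n$ into the predictable part $\sum_{k\le n} a_k$ plus the martingale $M_n=\sum_{k\le n} d_k$, bound the predictable part by $K$ via conditional Jensen/Cauchy--Schwarz, and show the martingale part is $o(n)$ almost surely. The only genuine difference is how the martingale part is controlled. The paper splits on the terminal value of the predictable quadratic variation $A_\infty=\sum_{k}\mathrm{var}(\xi_k\,|\,\mathcal{F}_{k-1})$: on $\{A_\infty<\infty\}$ it uses a.s.\ convergence of $M_n$ itself, while on $\{A_\infty=\infty\}$ it invokes the law of large numbers for martingales ($M_n/A_n\to 0$, citing Williams 12.14) together with $A_n\le nK^2$. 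You instead exploit the uniform bound $\mathbb{E}[d_k^2\,|\,\mathcal{F}_{k-1}]\le K^2$ to see that the weighted martingale $N_n=\sum_{k\le n}d_k/k$ is $L^2$-bounded (by orthogonality of martingale increments), hence a.s.\ convergent, and conclude by Kronecker's lemma that $\frac1n\sum_{k\le n}d_k\to 0$. Your route is self-contained---it essentially reproves the martingale strong law in the special case at hand---and it avoids the case split entirely, precisely because the uniform conditional variance bound makes the $1/k$-weighted series summable in $L^2$ regardless of whether $A_\infty$ is finite; the paper's route is shorter on the page because it cites the textbook martingale LLN, but that theorem only applies on $\{A_\infty=\infty\}$, which is what forces the two-case analysis. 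Both arguments are standard and fully rigorous.
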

\begin{proof}
Let $M_n=\sum_{k=1}^n\left({\xi_{k}}-\mathbb{E}[\xi_k|\mathcal{F}_{k-1}]\right) = M_{n-1}+\xi_n-\mathbb{E}[\xi_n|\mathcal{F}_{n-1}]$, $M_0=0$. Then $(M_n)_n$ is a square integrable martingale with quadratic variation
$$A_n = \sum_{k=1}^n\mathbb{E}[(M_k-M_{k-1})^2|\mathcal{F}_{k-1}] = \sum_{k=1}^n\text{var}\big(\xi_k|\mathcal{F}_{k-1}\big)$$
and $\lim_{n\to\infty}M_n$ exists almost surely on $\{A_\infty<\infty\}$. It follows that
$$\lim_{n\to\infty}\frac{M_n}n = 0\quad \text{on} \quad\{A_\infty<\infty\}\quad\text{(a.s.)}.$$

Furthermore, from \eqref{eq:condvar} we deduce that $A_n\le nK^2$ and by application of the Law of Large Numbers for Martingales (see for example 12.14 in \cite{W1991}), we obtain that $\lim_{n\to\infty}M_n/A_n = 0$ almost surely on $\{A_\infty=\infty\}$. We immediately deduce that
$$\lim_{n\to\infty}\frac{M_n}n = \lim_{n\to\infty} \frac{M_n}{A_n}\cdot \frac{A_n}n = 0\quad\text{on}\quad\{A_\infty=\infty\},\quad\text{(a.s.)}$$  and that
$$\limsup_{n\to\infty}\frac{X_n}n = \limsup_{n\to\infty}\frac1n\Big(M_n+\sum_{k=1}^n\mathbb{E}[\xi_k|\mathcal{F}_{k-1}]\Big)
\le \limsup_{n\to\infty}\frac1n\sum_{k=1}^n\mathbb{E}[|\xi_k||\mathcal{F}_{k-1}] \le K\ {\rm (a.s.)}.$$
\end{proof}

%For two sequence $a=(a_n)_n, b=(b_n)_n$, we say $b$ is dominated by $a$ and denote $a \succeq b $ if $a_{n}\ge b_n$ for all $n$.  For two sequences of random variables $X=(X_n)_n$, $Y=(Y_n)_n$, we say $Y$ is stochastically dominated by $X$ and denote $X \succeq_{st} Y$ if $$\P(X_{n}\le u\ | \ (X_1,X_2,...X_{n-1})= x) \le \P(Y_{n}\le u\ |\  (Y_1,Y_2,...Y_{n-1})= y)$$  for all $u\in \R$ and $x\succeq y$.

The following result is a direct consequence of Strassen's theorem on stochastic dominance for sequences of random variables (see Theorem 5.8, Chapter IV, p. 134 in \cite{Lindvall}).

\begin{lem}\label{lem.dom}
Assume that $(X_n)_{n\in \N}$ is a sequence of random variables such that $\P(X_1\le x) \le G_1(x)$ and
$$\P(X_n\le x \ |\ \mathcal{F}_{n-1})\le G_n(x) \text{ for all } x\in \R \text{ and } n\ge 2$$
where $\mathcal{F}_n=\sigma(X_1,\dots, X_n)$ and $(G_n)_{n\in \N}$ is a sequence of cumulative distribution functions. Then there exist sequences of r.v. $(\widehat{X}_n)_{n\in \N}$ and $(Y_n)_{n\in \N}$ such that \begin{itemize}
    \item  $(\widehat{X}_n)_n$ has the same distribution as $({X}_n)_{n\in \N}$, \item $(Y_n)_{n\in \N}$ are independent and $G_n$ is the cumulative distribution function of $Y_n$,
    \item $\widehat{X}_n\ge Y_n$ a.s. for all $n\in \N$.
\end{itemize}
\end{lem}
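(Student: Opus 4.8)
The plan is to realise the asserted coupling by a monotone (quantile) coupling driven by a single sequence of independent uniforms, which is exactly the mechanism behind the sequential Strassen theorem of Lindvall cited above. First I would fix a sequence $(U_n)_{n\in\N}$ of i.i.d.\ Uniform$[0,1]$ random variables and recall that, because everything lives on $\R$ (a Polish space), the regular conditional distributions $F_n(\,\cdot\,|\,x_1,\dots,x_{n-1})$ of $X_n$ given $(X_1,\dots,X_{n-1})$ exist; write $F_n^{-1}(u\,|\,\cdot)=\inf\{x:F_n(x\,|\,\cdot)\ge u\}$ for the associated conditional quantile function. I would then build $(\widehat X_n)_n$ recursively by setting $\widehat X_1=F_1^{-1}(U_1)$ and $\widehat X_n=F_n^{-1}(U_n\,|\,\widehat X_1,\dots,\widehat X_{n-1})$; a standard induction on the regular conditional distributions shows that $(\widehat X_n)_n$ has the same joint law as $(X_n)_n$, giving the first bullet.

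Next I would define $Y_n=G_n^{-1}(U_n)$, where $G_n^{-1}$ is the usual quantile function of $G_n$. Since each $Y_n$ is a function of $U_n$ alone and the $U_n$ are independent, the $(Y_n)_n$ are independent, and by the quantile-transform identity each $Y_n$ has cumulative distribution function $G_n$; this is the second bullet.

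For the domination, the key observation is that the hypothesis $\P(X_n\le x\,|\,\mathcal{F}_{n-1})\le G_n(x)$ says exactly that $F_n(x\,|\,X_1,\dots,X_{n-1})\le G_n(x)$, and since $(\widehat X_1,\dots,\widehat X_{n-1})$ shares the law of $(X_1,\dots,X_{n-1})$ this inequality transfers to the hatted variables. I would first upgrade it from ``for each fixed $x$, almost surely'' to ``for all $x$ simultaneously, almost surely'' by testing on a countable dense set of $x$ and invoking right-continuity of cumulative distribution functions. Inverting the pointwise inequality between distribution functions reverses it between quantile functions, $F_n^{-1}(u\,|\,\cdot)\ge G_n^{-1}(u)$ for every $u\in(0,1)$, whence $\widehat X_n=F_n^{-1}(U_n\,|\,\cdot)\ge G_n^{-1}(U_n)=Y_n$ almost surely, which is the third bullet.

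The main obstacle is bookkeeping rather than conceptual: one must justify the existence and measurable selection of the regular conditional distributions $F_n$, verify that the recursive quantile construction genuinely reproduces the joint law of $(X_n)_n$, and carefully pass from the $x$-by-$x$ conditional inequality to a single almost-sure event on which $F_n(\,\cdot\,|\,\cdot)\le G_n$ holds identically. Because all of this is already packaged in Lindvall's Theorem 5.8 (Chapter IV), the most economical write-up simply checks that the conditional stochastic-domination hypothesis $\P(X_n\le x\,|\,\mathcal{F}_{n-1})\le G_n(x)$ matches the assumption of that theorem and then invokes it directly.
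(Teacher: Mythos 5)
Your proposal is correct. Note that the paper offers essentially no proof of this lemma: it consists of the single remark that the statement is a direct consequence of Strassen's theorem on stochastic dominance for sequences (Lindvall, Theorem 5.8, Chapter IV) --- exactly the ``economical write-up'' you describe in your closing paragraph, so you end precisely where the paper ends. What you add is the explicit sequential quantile coupling that underlies that theorem: a single i.i.d.\ uniform sequence $(U_n)_n$ drives both $\widehat X_n = F_n^{-1}(U_n \mid \widehat X_1,\dots,\widehat X_{n-1})$ and $Y_n = G_n^{-1}(U_n)$; the joint law of $(X_n)_n$ is reproduced by induction on the regular conditional distributions; and the hypothesis $\P(X_n\le x \mid \mathcal{F}_{n-1})\le G_n(x)$ is upgraded to hold for all $x$ simultaneously (countable dense set plus right-continuity of CDFs) and then inverted to $F_n^{-1}(u\mid\cdot)\ge G_n^{-1}(u)$, yielding $\widehat X_n\ge Y_n$ almost surely. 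All three steps are sound, and in particular the inversion goes in the correct direction: $F\le G$ pointwise gives $\{x: F(x)\ge u\}\subseteq\{x: G(x)\ge u\}$, hence $F^{-1}(u)\ge G^{-1}(u)$. Your route buys a self-contained argument and makes the coupling concrete, at the cost of the measure-theoretic bookkeeping you honestly flag (existence and joint measurability of the conditional quantile maps, and the fact that the almost-sure inequality $F_n(\cdot\mid X_1,\dots,X_{n-1})\le G_n$ holds off a single null set of the law of $(X_1,\dots,X_{n-1})$ and therefore transfers to the hatted variables, which share that law); the paper's citation buys brevity by outsourcing exactly this bookkeeping to Lindvall.
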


\section*{Acknowledgement} {
The authors would like to thank the anonymous referees for their thorough reading and their constructive suggestions which improved the manuscript. The authors were supported by ARC grant DP180100613. Moreover, A.C. was supported by the Australian Research Council Centre of Excellence for Mathematical and Statistical Frontiers 
(CE140100049).}

\end{document}